\newcolumntype{P}[1]{>{\raggedright\arraybackslash}p{#1}}
\theoremstyle{plain}
\newtheorem{theorem}{Theorem}[section]
\newtheorem{remark}{Remark}[section]
\newtheorem{lemma}{Lemma}[section]
\newtheorem{assumption}{Assumption}[section]
\newtheorem{proposition}{Proposition}[section]
\numberwithin{equation}{section}
\newcommand{\R}{\mathbb{R}}
\newcommand{\E}{\mathbb{E}}
\title{On the Discrepancy Principle for Stochastic Gradient Descent}
\author{ Tim Jahn\thanks{Institute for Mathematics, Goethe-University Frankfurt, Germany (\texttt{jahn@math.uni-frankfurt.de})}
  \and Bangti Jin\thanks{Department of Computer Science, University College London, Gower Street, London WC1E 6BT, UK (\texttt{b.jin@ucl.ac.uk})
  The work of Bangti Jin is supported by UK EPSRC grant EP/T000864/1.}}
\begin{document}

\maketitle

\begin{abstract}
Stochastic gradient descent (SGD) is a promising numerical method for solving large-scale inverse problems. However, its theoretical
properties remain largely underexplored in the lens of classical regularization theory. In this note, we study the classical
discrepancy principle, one of the most popular \textit{a posteriori} choice rules, as the stopping criterion for SGD, and prove the finite-iteration termination property and the convergence of the iterate in probability as the noise level tends to zero. The
theoretical results are complemented with extensive numerical experiments.\\
\textbf{Key words}: stochastic gradient descent, discrepancy principle, convergence
\end{abstract}

\section{Introduction}
In this work, we study the following finite-dimensional linear inverse problem:
\begin{equation}\label{eqn:lininv}
Ax=y^\dagger,
\end{equation}
where $x\in\mathbb{R}^m$ is the unknown signal of interest, $y^\dagger \in\mathbb{R}^n$ is the exact
data and $A\in\mathbb{R}^{n\times m}$ is the system matrix. In practice, we have access only to a
corrupted version $y^\delta$ of the exact data $y^\dag=Ax^\dag$ (with the reference solution $x^\dag$
being any exact solution)
\begin{equation*}
  y^\delta=y^\dagger+\xi
\end{equation*}
where $\xi\in \mathbb{R}^n$ denotes the noise, with a noise level $\delta= \|\xi \|$. In the literature,
a large number of numerical methods have been proposed for solving linear inverse problems accurately
and efficiently (see, e.g., \cite{EnglHankeNeubauer:1996,KaltenbacherNeubauerScherzer:2008,ItoJin:2015}).

When the size of problem \eqref{eqn:lininv} is massive, one attractive method is a simple
stochastic gradient descent (SGD) \cite{RobbinsMonro:1951,BottouCurtisNocedal:2018}. In its simplest
form, it reads as follows: given an initial guess $x_1^\delta = x_1\in\mathbb{R}^m$, let
\begin{equation}\label{eqn:sgd}
  x_{k+1}^\delta: = x_k^\delta - \eta_k((a_{i_k},x_k^\delta)-y^\delta_{i_k})a_{i_k},\quad k=1,2,\ldots,
\end{equation}
where $\eta_k>0$ is a decreasing stepsize, $a_i$ is the $i$-th row of the matrix $A$ (as a column vector),
$(\cdot,\cdot)$ denotes Euclidean inner product on $\mathbb{R}^m$, and the row index $i_k$ at the $k$th
SGD iteration is chosen uniformly (with replacement) from the set $\{1,...,n\}$. It can be derived
by applying stochastic gradient descent to the quadratic functional:
\begin{equation*}
  J(x) = \frac{1}{2n}\|Ax-y^\delta\|^2 = \frac{1}{n}\sum_{i=1}^n f_i(x),\quad \mbox{with } f_i(x) = \frac12((a_i,x)-y_i^\delta)^2.
\end{equation*}
Distinctly, the method \eqref{eqn:sgd} operates only on one single data pair $(a_{i_k},y_{i_k})$ each time, and thus it is directly scalable
to the data size $n$ of problem \eqref{eqn:lininv}. This feature makes it especially attractive in the context of massive data.
In fact, SGD and its variants (e.g., minibatch and accelerated) have been established as the workhorse behind many
challenging training tasks in deep learning \cite{Bottou:2010,BottouCurtisNocedal:2018}, and they are also
popular for image reconstruction in computed tomography \cite{GordonBenderHerman:1970,Natterer:1986}.

Despite the apparent simplicity of the method, the mathematical theory in the lens of classical
regularization theory is far from complete. In the work \cite{JinLu:2019}, the regularizing property
of SGD was proved for a polynomially decaying stepsize schedule, when the stopping index $k$ is determined
\textit{a priori} in relation with the noise level $\delta$. Further, a convergence rate in the mean squared
norm between the iterate $x_k^\delta$ and the exact solution $x^\dag$ was derived, under suitable source
type condition on the ground truth $x^\dag$. These results were recently extended to mildly nonlinear
inverse problems, further assisted with suitable nonlinearity conditions  of the forward
map \cite{JinZhouZou:2020}. However, in these works, the convergence rate can only be achieved under a knowledge of the smoothness
parameter of $x^\dag$, which is usually not directly accessible in practice. Therefore, it is of enormous
practical importance and theoretical interest to develop \textit{a posteriori} stopping rules that do not require such a knowledge.

For deterministic iterative methods \cite{KaltenbacherNeubauerScherzer:2008}, e.g., Landweber method and
Gauss-Newton method, one popular  \textit{a posteriori} stopping rule is
the discrepancy principle, due to Morozov \cite{Morozov:1966}. Specifically, with $x_k^\delta$
being the $k$th iterate constructed by an iterative regularization method, the principle determines the stopping
index $k(\delta)$ by
\begin{equation}\label{eqn:dp}
k(\delta):=\min\left\{ k\in \mathbb{N}: \|Ax_k^\delta-y^\delta\| \le \tau \delta\right\},
\end{equation}
where the constant $\tau>1$ is fixed. {Note that the stopping index $k(\delta)$ depends on the
random iterate $x_k^\delta$, and thus it is also a random variable, which poses the main challenge in the theoretical
analysis.} The use of the discrepancy principle to many deterministic iterative methods
is well understood (see the monograph \cite{KaltenbacherNeubauerScherzer:2008} and the references therein),
but in the context of stochastic iterative methods, it has not been explored so far, to the best of our
knowledge. The goal of this work is to study the basic properties of the discrepancy
principle for SGD. It is worth noting that a direct computation of the residual $\|Ax_k^\delta-y^\delta\|$
at every SGD iteration is demanding. However, one may compute it not at every SGD iteration but only
with a given frequency (e.g., per epoch, see Section \ref{sec:numer}), as done by the popular stochastic variance reduced gradient
\cite{JohnsonZhang:2013}, for which residual evaluation is a part of gradient computation. Also there
are efficient methods to compute the residual $\|Ax_k^\delta-y^\delta\|$ using randomized SVD \cite{KluthJin:2019},
by exploiting the intrinsic low-rank nature for many practical inverse problems.

Now we specify the algorithmic parameters for SGD, and state the main results of the work. Throughout,
we make the following assumption on the stepsizes and the regularity condition on the ground truth solution $x^\dag$,
i.e., the minimum-norm solution defined by
\begin{equation}\label{eqn:sol-min-norm}
  x^\dag=\arg\min_{x:Ax=y^\dag}\|x\|.
\end{equation}
The stepsize schedule in (i) is commonly known as the polynomially decaying stepsize schedule, and (ii) is
the classical power type source condition, where $B=n^{-1}(A^TA)$ {(with $n$ being the data size, i.e., the number of
rows in $A$)}, imposing a type of smoothness
on the solution $x^\dag$ (relative to the system matrix $A$ and the initial guess $x_1$). In the analysis and
computation below, $x_1$ is fixed at $0$. {Generally, in classical regularization theory for infinite-dimensional
inverse problems, the source element $w$ plays the role of a Lagrangian multiplier of the constrained problem in
\eqref{eqn:sol-min-norm}, whose existence is not ensured for an operator
with a nonclosed range and has to be assumed \cite{EnglHankeNeubauer:1996,ItoJin:2015}. In the
finite-dimensional case, the existence of a source element $w$ for the case $p\le \frac12$ is ensured, but the
norm of the source element $w$ can be arbitrarily large.}
\begin{assumption}\label{ass:sgd}
The following conditions hold.
  \item[$\rm(i)$] The stepsizes $\eta_j$ satisfy  $\eta_j=c_0 j^{-\alpha}$, with $\alpha\in(0,1)$ and $c_0\le(\max_{j=1,...,n}\|a_j\|^2)^{-1}$.
  \item[$\rm(ii)$] There is a $p>0$ and a $w\in\mathbb{R}^m$ such that $x^\dagger-x_1=B^pw $.
\end{assumption}

The first theorem gives a finite-iteration termination property of the discrepancy principle, where $\mathbb{P}$
is with respect to the filtration generated by the random index $(i_k)_{k=1}^\infty$. It can also be viewed as
a partial result on the optimality. It implies in particular that for $p<\frac12$, the data propagation error is
of optimal order. The proof relies crucially on the observation that the variance component of the
mean squared residual contributes only marginally for sufficiently large $k$.
\begin{theorem}\label{thm:dp}
Let Assumption \ref{ass:sgd} be fulfilled, and $k(\delta)$ be determined by the discrepancy principle \eqref{eqn:dp}.
Then for all $0<r<1$ and $\tau>\tau^*>1$, with $c=\big(\frac{\tau^*-1}{\sqrt{n}c_p}\big)^{-\frac{2}{(1-\alpha)(\min(2p,r)+1)}}+2$, there holds
\begin{equation*}
 \mathbb{P}\left( k(\delta)\le c \delta^{-\frac{2}{(1-\alpha)(\min\left(2p,r\right)+1)}}\right)\to 1 \quad\mbox{as }\delta \to 0^+,
\end{equation*}
with the constant $c_p=(\frac{(p+\frac12)(1-\alpha)}{c_0e(2^{1-\alpha}-1)})^{p+\frac{1}{2}}\|w\|$.
\end{theorem}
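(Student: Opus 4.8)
The plan is to bound the mean-squared residual $\mathbb{E}[\|Ax_k^\delta - y^\delta\|^2]$ at a specific candidate stopping index and then use a probabilistic (Markov-type) argument to transfer this expectation bound into the high-probability statement. The key structural fact, hinted at in the paragraph preceding the theorem, is the bias-variance decomposition of the SGD iterate. Writing $\bar x_k = \mathbb{E}[x_k^\delta]$ for the mean iterate (which coincides with the deterministic Landweber-type iterate), I would decompose
\begin{equation*}
  \mathbb{E}\big[\|Ax_k^\delta - y^\delta\|^2\big] = \|A\bar x_k - y^\delta\|^2 + \mathbb{E}\big[\|A(x_k^\delta - \bar x_k)\|^2\big],
\end{equation*}
identifying the first term as the squared residual of the mean iterate (the bias contribution) and the second as the variance contribution propagated through $A$.

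**First I would** control the bias term. Since $\bar x_k$ is the deterministic gradient iterate, its residual $\|A\bar x_k - y^\delta\|$ splits further via the triangle inequality into the noise-free residual $\|A\bar x_k - y^\dagger\|$ and the noise term $\|\xi\| = \delta$. The noise-free residual is estimated using the source condition $x^\dagger - x_1 = B^p w$ from Assumption \ref{ass:sgd}(ii): standard spectral estimates for the polynomially decaying stepsize schedule give a decay of the form $\|A\bar x_k - y^\dagger\| \lesssim \sqrt{n}\,c_p\, k^{-(1-\alpha)(p+\frac12)}$, where the constant $c_p$ matches the one in the statement (arising from maximizing $\lambda^{p+1/2}\prod_{j}(1-\eta_j\lambda)$ over the spectrum of $B$). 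Choosing the candidate index $k_*$ of order $\delta^{-2/((1-\alpha)(\min(2p,r)+1))}$ makes this noise-free residual comparable to $(\tau^*-1)\delta$, so that the total bias residual falls below $\tau^* \delta$ for all small $\delta$.

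**The variance term** is where I expect the main obstacle to lie, and it is exactly the observation the authors flag as crucial. I would need an a priori bound showing $\mathbb{E}[\|A(x_k^\delta - \bar x_k)\|^2]$ is controlled — ideally showing it is $o(\delta^2)$ at the candidate index, or at least small relative to the gap between $\tau$ and $\tau^*$. This requires tracking the variance recursion for SGD with decaying stepsizes: the per-step injected variance is proportional to $\eta_k^2$ times a residual-dependent factor, and one must show that the cumulative propagated variance, damped by the contraction of the iteration, grows slowly enough (sub-$\delta^2$) at $k_*$. The technical difficulty is that the variance itself depends on the (random) residual magnitudes, so the recursion is not closed; one typically resolves this by bounding the residual factors uniformly or by a bootstrapping argument using the already-established bias decay.

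**Finally**, with $\mathbb{E}[\|Ax_{k_*}^\delta - y^\delta\|^2] \le (\tau^*\delta)^2 + o(\delta^2) \le (\tau\delta)^2$ for $\delta$ small, I would invoke Markov's inequality (or Chebyshev applied to the residual) to conclude that $\mathbb{P}(\|Ax_{k_*}^\delta - y^\delta\| \le \tau\delta) \to 1$. Since the discrepancy principle stops at the \emph{first} index meeting the threshold, the event $\{\|Ax_{k_*}^\delta - y^\delta\| \le \tau\delta\}$ implies $\{k(\delta) \le k_*\}$, which yields the claimed convergence of the probability to $1$. The additive $+2$ in the constant $c$ simply absorbs the rounding of $k_*$ to an integer and the initial-index offset.
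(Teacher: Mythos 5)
Your skeleton is the same as the paper's: center at the mean iterate $\E[x_k^\delta]$ (which is the Landweber iterate), bound its noise-free residual by $\sqrt{n}c_p k^{-(p+\frac12)(1-\alpha)}$ via the source condition, pick $k_*\sim\delta^{-2/((1-\alpha)(\min(2p,r)+1))}$ so the bias residual falls below $\tau^*\delta$, and use that the variance $\E[\|A(x_{k_*}^\delta-\E[x_{k_*}^\delta])\|^2]$ is $o(\delta^2)$ (these are exactly the paper's Propositions \ref{prop:dp-bdd} and \ref{prop:sgd-variance}). However, your final probabilistic step has a genuine flaw. From $\E[\|Ax_{k_*}^\delta-y^\delta\|^2]\le(\tau^*\delta)^2+o(\delta^2)$, Markov's inequality gives only
\begin{equation*}
\mathbb{P}\big(\|Ax_{k_*}^\delta-y^\delta\|>\tau\delta\big)\;\le\;\frac{(\tau^*)^2\delta^2+o(\delta^2)}{\tau^2\delta^2}\;\longrightarrow\;\Big(\frac{\tau^*}{\tau}\Big)^2<1,
\end{equation*}
a constant strictly less than one but \emph{not} tending to zero: the mean squared residual is of the same order $\delta^2$ as the threshold, so no Markov-type bound on the uncentered second moment can yield a vanishing failure probability, and the claimed conclusion $\mathbb{P}(k(\delta)\le k_*)\to1$ does not follow. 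The fix, which is precisely the paper's argument, is to exploit that the bias part is \emph{deterministic}: since $\|A\E[x_{k_*}^\delta]-y^\delta\|\le\tau^*\delta$ holds surely, the triangle inequality reduces the event $\{\|Ax_{k_*}^\delta-y^\delta\|>\tau\delta\}$ to the event $\{\|A(x_{k_*}^\delta-\E[x_{k_*}^\delta])\|>(\tau-\tau^*)\delta\}$, and Chebyshev applied to this \emph{centered} quantity gives a failure probability at most $o(\delta^2)/((\tau-\tau^*)^2\delta^2)\to0$. Your parenthetical ``or Chebyshev applied to the residual'' could be read this way, but as written the input to your concentration step is the uncentered bound $\E[\|Ax_{k_*}^\delta-y^\delta\|^2]\le(\tau\delta)^2$, which is insufficient.

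A second, smaller point: the bound $\E[\|A(x_{k_*}^\delta-\E[x_{k_*}^\delta])\|^2]=o(\delta^2)$ is where the bulk of the paper's technical work lies (a variance recursion in which the residual factor is again split into bias and variance, the bias decay plugged in, and an induction closing the recursion). You correctly diagnose that the recursion is not closed and that bootstrapping with the bias decay is the remedy, but in your proposal this remains a stated need rather than a proof. Note also that the restriction $r<1$, and hence the exponent $\min(2p,r)$ rather than $2p$ in the statement, enters exactly here: the attainable variance decay saturates at roughly $k^{-2(1-\alpha)}$, so requiring it to be $o(\delta^2)$ at $k_*\sim\delta^{-2/((1-\alpha)(\min(2p,r)+1))}$ forces $\min(2p,r)+1<2$; any correct completion of your sketch must reproduce this constraint, which your write-up does not account for.
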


The second contribution of this work is on the convergence in probability of the SGD iterate $x_{k(\delta)}^\delta$
with the stopping index $k(\delta)$ determined by \eqref{eqn:dp}. This result has one drawback. In the proof, we have to assume that the stopping index
$k(\delta)$ is independent of the iterates $x_{k(\delta)}^\delta$. In practice, this can be achieved by running SGD twice with the same data $(y^\delta,
\delta)$: the first round is for the determination of $k(\delta)$, then the second (independent) round is stopped
using $k(\delta)$. This increases the computational expense by a factor of $2$. However, the numerical results in Section
\ref{sec:numer} show that one can use the iterate from the first run without compromising the accuracy.
\begin{theorem}\label{thm:dp:conv}
Let Assumption \ref{ass:sgd} be fulfilled, and $k(\delta)$ be determined by the discrepancy principle \eqref{eqn:dp}. Then for all $\varepsilon>0$  there holds
\begin{equation*}
 \mathbb{P}\left( \| x_{k(\delta)}^\delta-x^\dagger\| \ge \varepsilon\right)\to 0 \quad\mbox{as }\delta \to 0^+,
\end{equation*}
where $(x_k^\delta)_{k\in\mathbb{N}}$ are SGD iterates independent of $k(\delta)$, with the same data $(y^\delta,\delta)$.
\end{theorem}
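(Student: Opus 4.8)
The plan is to use the independence of the stopping index $k(\delta)$ (first run) and the iterates $(x_k^\delta)_k$ (second run) to factorize the target probability and reduce matters to a \emph{deterministic}-index mean-square estimate that is uniform over a suitable window of iteration counts. Introduce the noise-free iterates $x_k$ obtained from \eqref{eqn:sgd} by replacing $y^\delta$ with $y^\dagger$ while keeping the same indices $(i_k)$, and write $e_k:=x_k-x^\dagger$, $d_k:=x_k^\delta-x_k$, so that $x_k^\delta-x^\dagger=e_k+d_k$. Independence gives
\begin{equation*}
\PP\lk\|x_{k(\delta)}^\delta-x^\dagger\|\ge\vare\rk=\sum_{k\ge1}\PP(k(\delta)=k)\,\PP\lk\|x_k^\delta-x^\dagger\|\ge\vare\rk.
\end{equation*}
Fix $K\in\N$, set $\bar k(\delta):=c\,\delta^{-\frac{2}{(1-\alpha)(\min(2p,r)+1)}}$ as in Theorem \ref{thm:dp}, split the sum at $k<K$, $K\le k\le\bar k(\delta)$, $k>\bar k(\delta)$, and apply Markov's inequality to the middle block to obtain
\begin{equation*}
\PP\lk\|x_{k(\delta)}^\delta-x^\dagger\|\ge\vare\rk\le \PP(k(\delta)<K)+\PP(k(\delta)>\bar k(\delta))+\vare^{-2}\!\!\sup_{K\le k\le\bar k(\delta)}\!\!\E\|x_k^\delta-x^\dagger\|^2.
\end{equation*}

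Next I would bound the supremum via $\E\|x_k^\delta-x^\dagger\|^2\le2\E\|e_k\|^2+2\E\|d_k\|^2$. For the bias, the noise-free recursion gives $e_{k+1}=(I-\eta_k a_{i_k}a_{i_k}^T)e_k$ with $e_1=-x^\dagger\in\mathrm{range}(B)$; taking conditional expectations and using $\eta_k\|a_{i_k}\|^2\le1$ and $\eta_k\le c_0$ from Assumption \ref{ass:sgd}(i) yields $\E[(I-\eta_k a_{i_k}a_{i_k}^T)^2\,|\,\mathcal F_k]\preceq I-\eta_k B$, hence $\E\|e_{k+1}\|^2\le\E[e_k^T(I-\eta_k B)e_k]\le\E\|e_k\|^2$. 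Thus $\E\|e_k\|^2$ is nonincreasing; since $e_k$ remains in $\mathrm{range}(B)=(\ker B)^\perp$, on which $B$ is positive definite, and $\sum_k\eta_k=\infty$, it converges to $0$, so $\sup_{k\ge K}\E\|e_k\|^2=\E\|e_K\|^2\to0$ as $K\to\infty$. For the noise propagation, $d_{k+1}=(I-\eta_k a_{i_k}a_{i_k}^T)d_k+\eta_k\xi_{i_k}a_{i_k}$ with $d_1=0$, and the accumulation estimate underlying Theorem \ref{thm:dp} gives $\E\|d_k\|^2\le C\delta^2\sum_{j<k}\eta_j\le C'\delta^2k^{1-\alpha}$, which is increasing in $k$. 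Evaluating at $\bar k(\delta)$ yields $\sup_{k\le\bar k(\delta)}\E\|d_k\|^2\le C'\delta^2\bar k(\delta)^{1-\alpha}=O\big(\delta^{\frac{2\min(2p,r)}{\min(2p,r)+1}}\big)\to0$. Hence $\sup_{K\le k\le\bar k(\delta)}\E\|x_k^\delta-x^\dagger\|^2\le2\E\|e_K\|^2+o(1)$ as $\delta\to0^+$.

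It remains to handle the two stopping-index probabilities. The upper tail $\PP(k(\delta)>\bar k(\delta))\to0$ is precisely the content of Theorem \ref{thm:dp}. For the lower tail, a union bound over the finitely many $k<K$ reduces everything to showing $\PP(\|Ax_k^\delta-y^\delta\|\le\tau\delta)\to0$ for each fixed $k$. I would use $\|Ax_k^\delta-y^\delta\|\ge\|Ax_k-y^\dagger\|-\|A\|\,\|d_k\|-\delta$ together with $\E\|d_k\|^2\le C_k\delta^2\to0$: the residual converges in probability to $\|Ax_k-y^\dagger\|=\|Ae_k\|$, which is almost surely strictly positive because $e_1=-x^\dagger\ne0$, the factors $I-\eta_j a_{i_j}a_{i_j}^T$ are invertible for $j\ge2$ (as $\eta_j\|a_{i_j}\|^2<1$ there), and $e_k\in\mathrm{range}(A^T)$ forces $Ae_k\ne0$. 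Since $\tau\delta\to0$, this gives $\PP(k(\delta)<K)\to0$, i.e.\ the stopping index diverges in probability.

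Combining the three bounds and taking $\limsup_{\delta\to0^+}$ leaves only $2\vare^{-2}\E\|e_K\|^2$ on the right; letting $K\to\infty$ then gives the claim. The main obstacle is the divergence $k(\delta)\to\infty$: the bias vanishes only if the discrepancy principle is forced to iterate ever longer as $\delta\to0$, and proving this hinges on the almost-sure positivity of the limiting noise-free residual $\|Ae_k\|$ at each fixed $k$ (with the boundary case $\eta_1\|a_{i_1}\|^2=1$ requiring separate, minor attention). The independence hypothesis is used precisely at the factorization in the first display, decoupling the random stopping time from the iterate and turning the problem into the deterministic-index estimates above.
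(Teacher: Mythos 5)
Your overall architecture---factorize via independence, split the index range at a fixed $K$ and at $\bar k(\delta)$, apply Chebyshev's inequality on the middle window, and invoke Theorem \ref{thm:dp} for the upper tail---closely parallels the paper's proof, which splits at $k_\delta^-$ and $k_\delta^+$ and controls the middle window $\Omega_\delta$ by an error decomposition plus Markov's inequality. Your middle-window estimates are essentially sound: the bias argument via $\E[(I-\eta_k a_{i_k}a_{i_k}^T)^2\,|\,\mathcal{F}_{k-1}]\preceq I-\eta_k B$ and positive definiteness of $B$ on $\mathcal{R}(A^t)$ is correct in this finite-dimensional setting, and the bound $\E\|d_k\|^2\le C\delta^2\sum_{j<k}\eta_j$ is true---but it does \emph{not} follow from the accumulation estimates behind Theorem \ref{thm:dp}; the paper's pathwise estimates \eqref{prop:dp-es:es1}--\eqref{prop:dp-es:es2} carry exponentially growing factors $\prod_j(1+\eta_j\|A\|^2)$, so you would need either a direct conditional second-moment computation (Young's inequality on the noise cross term) or the paper's route via Lemma \ref{thm:JinLu} and Proposition \ref{prop:sgd-residual}. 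That is fixable.

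The genuine gap is the lower tail. You claim $\PP(k(\delta)<K)\to 0$ on the grounds that $\|Ae_k\|$ is almost surely positive for each fixed $k$, and you defer the boundary case $\eta_1\|a_{i_1}\|^2=1$ as ``minor''. It is not: Assumption \ref{ass:sgd}(i) allows $c_0=(\max_j\|a_j\|^2)^{-1}$ (exactly the choice used in the paper's experiments), and if moreover $x^\dagger\in\mathrm{span}(a_{j^*})$ for a row $a_{j^*}$ of maximal norm, then on $\{i_1=j^*\}$ (probability $\ge 1/n$) one has $e_2=(I-\eta_1a_{j^*}a_{j^*}^T)e_1=0$, i.e.\ noise-free SGD hits $x^\dagger$ exactly. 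Concretely, for $A=\mathrm{diag}(1,\tfrac12)$, $x^\dagger=(1,0)^T$, $c_0=1$, $\xi=\delta e_1$, the run defining $k(\delta)$ stops at $k(\delta)=2$ with probability $\ge\tfrac12$ for \emph{every} small $\delta$, so the stopping index does not diverge in probability and your three-block bound collapses. The paper's key idea, which your proposal lacks, is Proposition \ref{prop:dp-es}: it never claims early stopping is improbable, but that it is \emph{benign}---for small $\delta$, stopping at $k\le K$ forces $\|Ax_{k(\delta)}-y^\dagger\|=0$, which by $x_{k(\delta)}\in\mathcal{R}(A^t)=\mathcal{N}(A)^\perp$ forces $x_{k(\delta)}=x^\dagger$, whence $\|x_{k(\delta)}^\delta-x^\dagger\|=\|x_{k(\delta)}^\delta-x_{k(\delta)}\|=O_K(\delta)$. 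This ``zero residual implies exact solution'' mechanism cannot be recovered by perturbing your positivity argument. (Be aware that this corner case is delicate even for the paper: Proposition \ref{prop:dp-es} is proved for the run that defines $k(\delta)$, whereas the theorem evaluates an independent run, whose iterate $x_2^\delta$ in the example above equals $0$ with probability $\tfrac12$ and stays at distance $1$ from $x^\dagger$.) Under the strict inequality $c_0<(\max_j\|a_j\|^2)^{-1}$, all factors $I-\eta_ja_{i_j}a_{i_j}^T$ are invertible, your positivity claim holds (indeed $\PP(k(\delta)\le K)=0$ for small $\delta$), and your proof would then be complete and somewhat more self-contained than the paper's; but under the assumptions as stated, the missing idea is precisely the paper's Proposition \ref{prop:dp-es}.
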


In sum, Theorems \ref{thm:dp} and \ref{thm:dp:conv} confirm that the discrepancy principle is a valid
\textit{a posteriori} stopping rule for SGD. However, they do not give a rate of convergence, which remains
an open problem. Numerically, we observe that the convergence rate obtained by the discrepancy
principle is nearly order-optimal for low-regularity solutions, as the \textit{a priori} rule
in the regime in \cite{JinLu:2019}, and the performance is competitive with the standard Landweber
method. Thus, the method is especially attractive for finding a low-accuracy solution. However, for very
smooth solutions (i.e., large $p$), it manifested as an undesirable saturation phenomenon, due to the presence of the significant
variance component (when compared with the approximation error), under the setting of Assumption
\ref{ass:sgd}. The rest of the paper is organized as follows. In Sections \ref{sec:bound} and \ref{sec:dp:conv},
we prove Theorems \ref{thm:dp} and \ref{thm:dp:conv}, respectively. Several auxiliary results needed for the
proof of Theorem \ref{thm:dp} are given in Section \ref{sec:auxil}. Finally, several numerical experiments are
presented in Section \ref{sec:numer} to complement the theoretical analysis. We conclude with some useful notation.
We denote the SGD iterate for exact data $y^\dag$ by $x_k$, and that for noisy data $y^\delta$ by $x_k^\delta$.
The expectation $\E[\cdot]$ is with respect to the filtration $\mathcal{F}_k$, generated by the random indices
$\{i_1,\ldots,i_{k}\}$.

\section{The proof of Theorem \ref{thm:dp}}\label{sec:bound}
In this section, we give the proof of Theorem \ref{thm:dp}. First, we give several preliminary facts.
By the construction in \eqref{eqn:sgd}, since $x_k^\delta$ is measurable with respect to $\mathcal{F}_{k-1}$,
\begin{align*}
  \E[x_{k+1}^\delta|\mathcal{F}_{k-1}] &= x_k^\delta-\eta_kn^{-1}\sum_{i=1}^n ((a_i,x_k^\delta)-y_i^\delta)a_i \\
    &= x_k^\delta - \eta_k n^{-1}(A^tAx^\delta_k-A^ty^\delta).
\end{align*}
Thus, by the law of total expectation, the sequence $(\E[x_{k}^\delta])_{k\in\mathbb{N}}$ satisfies the following recursion:
\begin{align}\label{eqn:sgd=lm}
  \E[x_{k+1}^\delta] = \E[x_k^\delta] - \eta_k (\bar A^t\bar A\E[x_k^\delta]-\bar A^t\bar y^\delta)
\end{align}
with $\bar A = n^{-\frac{1}{2}}A$ and $\bar y^\delta=n^{-\frac12}y^\delta$. This is exactly the classical Landweber
method \cite{Landweber:1951} (but with diminishing stepsizes) applied to the rescaled linear system $\bar Ax = \bar y^\delta$. For the Landweber
method, the discrepancy principle \eqref{eqn:dp}, e.g., regularizing property and optimal convergence rates, has been
thoroughly studied for both linear and nonlinear inverse problems (see, e.g., \cite[Chapter 6]{EnglHankeNeubauer:1996}
and \cite{KaltenbacherNeubauerScherzer:2008}). The key insight for the analysis below is the following empirical observation:
for a suitably large $k$, typically the variance component $\E[\|A(x_k^{\delta} - \E [x_{k}^\delta])\|^2]\ll \delta^2$,
as confirmed by the numerical experiments in Section \ref{ssec:variance}. This fact allows us to transfer the results for
the Landweber method to SGD.

The proof of Theorem \ref{thm:dp} employs two preliminary results, whose lengthy proofs are deferred to
Section \ref{sec:auxil}. The first result gives an upper bound of the following stopping index $k^*(\delta)$,
for any $\tau^*>1$, defined by
\begin{equation}\label{eqn:dpex}
k^*(\delta):=\min\{k\in\mathbb{N}~:~ \| A \E [x_{k}^\delta]-y^\delta \|\le \tau^* \delta\}.
\end{equation}
Clearly, $k^*(\delta)$ is the stopping index by the classical discrepancy principle, when applied to
the sequence $(\E [x_{k}^\delta])_{k\in\mathbb{N}}$, which is exactly the Landweber method, in view of the relation \eqref{eqn:sgd=lm}.
\begin{proposition}\label{prop:dp-bdd}
Let Assumption \ref{ass:sgd} be fulfilled. Then for $k^*(\delta)$ defined in \eqref{eqn:dpex}, there holds
\begin{equation}
 k^*(\delta)\le \Big(\frac{\tau^*-1}{\sqrt{n}c_p}  \delta\Big)^{-\frac{2}{(1-\alpha)(2p+1)}}+2,
\end{equation}
with $c_p=(\frac{(p+\frac12)(1-\alpha)}{c_0e(2^{1-\alpha}-1)})^{p+\frac{1}{2}}\|w\|$.
\end{proposition}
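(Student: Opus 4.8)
The plan is to exploit the identification in \eqref{eqn:sgd=lm}, namely that the mean iterate $\E[x_k^\delta]$ is exactly the Landweber iterate (with diminishing stepsizes $\eta_k=c_0k^{-\alpha}$) for the rescaled system $\bar Ax=\bar y^\delta$, and then to carry out the classical discrepancy-principle analysis for Landweber while tracking every constant. First I would pass to the rescaled residual $r_k^\delta:=\bar A\,\E[x_k^\delta]-\bar y^\delta$, noting that $\|A\,\E[x_k^\delta]-y^\delta\|=\sqrt n\,\|r_k^\delta\|$, so that the stopping condition defining $k^*(\delta)$ in \eqref{eqn:dpex} reads $\|r_k^\delta\|\le\tau^*n^{-\frac12}\delta$. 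The decisive structural fact is that applying $\bar A$ to the Landweber update turns \eqref{eqn:sgd=lm} into the exact one-step recursion $r_{k+1}^\delta=(I-\eta_kC)r_k^\delta$ with $C:=\bar A\bar A^t$, whence $r_k^\delta=\prod_{j=1}^{k-1}(I-\eta_jC)\,r_1^\delta$. Since $x_1=0$ we have $r_1^\delta=-\bar y^\delta=-(\bar Ax^\dagger+n^{-\frac12}\xi)$, and using the intertwining $\prod_{j}(I-\eta_jC)\bar A=\bar A\prod_{j}(I-\eta_j\bar B)$ with $\bar B=\bar A^t\bar A=B$, this splits cleanly into a bias term and a noise term:
\[
 r_k^\delta=-\,\bar A\prod_{j=1}^{k-1}(I-\eta_j B)x^\dagger-\prod_{j=1}^{k-1}(I-\eta_jC)\,n^{-\frac12}\xi.
\]

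Next I would bound the two pieces separately. Because the eigenvalues of $C$ coincide with the nonzero eigenvalues of $B$ and lie in $[0,\max_j\|a_j\|^2]$, while $\eta_j\le c_0\le(\max_j\|a_j\|^2)^{-1}$ by Assumption~\ref{ass:sgd}(i), we have $\|I-\eta_jC\|\le1$, so the noise term is bounded by $n^{-\frac12}\|\xi\|=n^{-\frac12}\delta$. For the bias term I would invoke the source condition $x^\dagger=B^pw$ from Assumption~\ref{ass:sgd}(ii) together with the identity $\|\bar Au\|=\|B^{\frac12}u\|$, reducing its norm to $\|\prod_{j=1}^{k-1}(I-\eta_jB)B^{p+\frac12}w\|$. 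Spectral calculus with $1-t\le e^{-t}$ and the maximization $\sup_{\lambda\ge0}\lambda^{p+\frac12}e^{-s\lambda}=\big(\tfrac{p+\frac12}{es}\big)^{p+\frac12}$ at $\lambda=(p+\tfrac12)/s$, where $s=\sum_{j=1}^{k-1}\eta_j$, yields the bound $\big(\tfrac{p+\frac12}{e\,s}\big)^{p+\frac12}\|w\|$. I would then lower bound $s=c_0\sum_{j=1}^{k-1}j^{-\alpha}\ge c_0\frac{(2^{1-\alpha}-1)(k-1)^{1-\alpha}}{1-\alpha}$ for $k\ge2$, via $\sum_{j=1}^{k-1}j^{-\alpha}\ge\int_1^k t^{-\alpha}\,dt=\frac{k^{1-\alpha}-1}{1-\alpha}$ combined with the elementary inequality $k^{1-\alpha}-1\ge(2^{1-\alpha}-1)(k-1)^{1-\alpha}$; substituting gives the bias bound $c_p(k-1)^{-(1-\alpha)(p+\frac12)}$ with exactly the stated $c_p$.

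Combining the two estimates gives $\|r_k^\delta\|\le c_p(k-1)^{-(1-\alpha)(p+\frac12)}+n^{-\frac12}\delta$, so the stopping condition $\|r_k^\delta\|\le\tau^*n^{-\frac12}\delta$ is met as soon as $c_p(k-1)^{-(1-\alpha)(p+\frac12)}\le(\tau^*-1)n^{-\frac12}\delta$, i.e. once $k-1\ge\big(\tfrac{\tau^*-1}{\sqrt n\,c_p}\delta\big)^{-\frac{2}{(1-\alpha)(2p+1)}}$ (using $(1-\alpha)(p+\tfrac12)=\tfrac12(1-\alpha)(2p+1)$). The smallest integer $k$ satisfying this, and hence $k^*(\delta)$, is at most that quantity plus $2$ (one from solving for $k-1$, one from rounding up), which is precisely the claimed bound. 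I expect the main obstacle to be the constant bookkeeping rather than any conceptual point: the sharp threshold $\tau^*-1$ is available only because the noise term is controlled by \emph{exactly} $n^{-\frac12}\delta$, which hinges on the residual recursion and on $\|I-\eta_jC\|\le1$; and reproducing $c_p$ together with the additive ``$+2$'' requires the two elementary but delicate inequalities (the $\lambda^{p+\frac12}e^{-s\lambda}$ maximization and the $2^{1-\alpha}-1$ lower bound on the partial stepsize sums). Once the Landweber residual structure of $\E[x_k^\delta]$ is recognized, everything else is routine deterministic regularization analysis.
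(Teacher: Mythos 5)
Your proposal is correct and takes essentially the same route as the paper: both identify $\E[x_k^\delta]$ with the Landweber iterate via \eqref{eqn:sgd=lm}, split the residual into the exact-data bias part and a propagated-noise part bounded by $\delta$ through the contraction products $\prod_j(I-\tfrac{\eta_j}{n}AA^t)$, and conclude from the bias decay $c_p(k-1)^{-(p+\frac12)(1-\alpha)}$ together with the threshold $\tau^*-1$. The only difference is that you derive the bias decay from scratch by spectral calculus (the $\lambda^{p+\frac12}e^{-s\lambda}$ maximization and the lower bound on $\sum_j \eta_j$), whereas the paper obtains exactly this estimate by citing Lemma \ref{thm:JinLu} from \cite{JinLu:2019}.
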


The second result gives an upper bound on the variance component $\E[\|A(x_{\kappa(\delta)}^{\delta}-\E [x_{\kappa(\delta)}^\delta])\|^2]$
of the mean squared residual $\E[\|Ax_k-y^\delta\|^2]$. It indicates that the variance $\E[\|A(x_{k(\delta)}^\delta-\E[x_{k(\delta)}^\delta])\|^2]$ contributes only marginally to
the mean squared residual $\E[\|Ax_{k(\delta)}^\delta-y^\delta\|^2]$, and consequently the squared residual $\|Ax_{k(\delta)}^\delta-y^\delta\|^2$
of individual realizations of SGD may be used instead for determining an appropriate stopping index.

\begin{proposition}\label{prop:sgd-variance}
Under Assumption \ref{ass:sgd} with $\kappa(\delta)\ge  \delta^{-\frac{2}{(1-\alpha)(\min(2p,r)+1)}}$ and $0<r<1$, there holds
\begin{equation*}
\E[\|A(x_{\kappa(\delta)}^{\delta}-\E[x_{\kappa(\delta)}^\delta])\|^2] = o(\delta^2),\quad\mbox{as }\delta\to0^+.
\end{equation*}
\end{proposition}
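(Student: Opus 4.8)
The plan is to track the deviation $e_k := x_k^\delta-\E[x_k^\delta]$ of the SGD iterate from its mean and bound $\E[\|Ae_{\kappa(\delta)}\|^2]$ directly, since $\|A(x_k^\delta-\E[x_k^\delta])\|=\|Ae_k\|$. Writing $B=n^{-1}A^tA$ and letting $M_k:=((a_{i_k},x_k^\delta)-y_{i_k}^\delta)a_{i_k}-\E[\,\cdot\,|\mathcal F_{k-1}]$ be the conditionally mean-zero gradient noise, subtracting the mean recursion \eqref{eqn:sgd=lm} from \eqref{eqn:sgd} gives $e_{k+1}=(I-\eta_k B)e_k-\eta_k M_k$ with $e_1=0$. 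First I would unroll this linear recursion to get $e_{k+1}=-\sum_{j=1}^k\eta_j\,\Pi_{j+1}^k M_j$, with $\Pi_{j+1}^k:=\prod_{l=j+1}^k(I-\eta_l B)$. Since $\E[M_j|\mathcal F_{j-1}]=0$ and the filters $\Pi_{j+1}^k$ are deterministic functions of $B$, all cross terms vanish in expectation, and using $\|Av\|^2=n(v,Bv)$ one obtains the clean identity $\E[\|Ae_{k+1}\|^2]=n\sum_{j=1}^k\eta_j^2\,\E[(M_j,B(\Pi_{j+1}^k)^2M_j)]$.

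Next I would estimate the two ingredients of this sum. For the gradient noise, conditioning and using Assumption \ref{ass:sgd}(i) (so $\|a_i\|^2\le c_0^{-1}$) gives $\E[\|M_j\|^2|\mathcal F_{j-1}]\le c_0^{-1}n^{-1}\|Ax_j^\delta-y^\delta\|^2$, hence $\E[\|M_j\|^2]\le c_0^{-1}n^{-1}R_j$ with $R_j:=\E[\|Ax_j^\delta-y^\delta\|^2]$. For the spectral multiplier $\phi_{j,k}:=\|B(\Pi_{j+1}^k)^2\|$, the decisive observation, and the one that makes the finite-dimensional setting essential, is that every $M_j$ lies in $\mathrm{Range}(A^t)=\mathrm{Range}(B)$, on which $B$ has a strictly positive smallest eigenvalue $\lambda_{\min}$. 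Consequently, with $s_{j,k}:=\sum_{l=j+1}^k\eta_l$, the elementary bound $\lambda\prod_l(1-\eta_l\lambda)^2\le\lambda e^{-2\lambda s_{j,k}}$ yields the crude $\phi_{j,k}\le(2e\,s_{j,k})^{-1}$ in general, but the far sharper $\phi_{j,k}\le\lambda_{\min}e^{-2\lambda_{\min}s_{j,k}}$ once $s_{j,k}\ge(2\lambda_{\min})^{-1}$. Together these give $\E[\|Ae_{k+1}\|^2]\le c_0^{-1}\sum_{j=1}^k\eta_j^2\phi_{j,k}R_j$.

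I would then split the sum at the boundary layer $s_{j,k}\approx(2\lambda_{\min})^{-1}$, i.e. the last $O(k^\alpha)$ indices. Since $s_{j,k}\gtrsim k^{1-\alpha}$ for the early indices, their contribution is suppressed by a factor $e^{-c\,k^{1-\alpha}}$ and is therefore $o(\delta^2)$ no matter how large the early residuals $R_j$ are; this exponential gain is exactly what the $(2e\,s_{j,k})^{-1}$ bound alone cannot provide, and is why the result is finite-dimensional. For the late indices I would use the source condition together with $\kappa(\delta)\ge\delta^{-2/((1-\alpha)(\min(2p,r)+1))}$: this places the whole boundary layer at or beyond the Landweber stopping index $k^*(\delta)$ of Proposition \ref{prop:dp-bdd}, so by monotonicity of the Landweber residual the deterministic part $\|A\E[x_j^\delta]-y^\delta\|^2\lesssim\delta^2$ throughout the layer, while the weight $\sum_{j\ \mathrm{late}}\eta_j^2\phi_{j,k}=O(k^{-\alpha}\log k)$, so these terms contribute $\delta^2\cdot o(1)$.

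The main obstacle is the variance feedback: the decomposition $R_j=\|A\E[x_j^\delta]-y^\delta\|^2+\E[\|Ae_j\|^2]$ reintroduces the quantity $W_j:=\E[\|Ae_j\|^2]$ being estimated, so the above is really a self-referential inequality $W_{k+1}\le S_k+c_0^{-1}\sum_{j=1}^k\eta_j^2\phi_{j,k}W_j$, with the source term $S_k=o(\delta^2)$ already bounded. I would close this with a discrete Gronwall argument, using that the spectral-gap estimate forces the total feedback coefficient $c_0^{-1}\sum_j\eta_j^2\phi_{j,k}$ to be bounded (indeed $\to0$), whence $W_{\kappa(\delta)}\lesssim S_{\kappa(\delta)}=o(\delta^2)$. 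The delicate point is making the competing exponents cooperate: the polynomial growth of the late-term weights must be beaten by the $\delta^2$ floor supplied by the discrepancy principle via $\kappa(\delta)$, while the spectral gap absorbs the order-one early residuals; the bookkeeping of these constants (and of the edge case $\min(2p,r)=2p$, where $\kappa(\delta)$ only barely exceeds $k^*(\delta)$) is the part that will require care.
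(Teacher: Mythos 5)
Your opening steps are sound and in fact parallel the paper's own machinery: unrolling $e_{k+1}=(I-\eta_k B)e_k-\eta_k M_k$, killing the cross terms by the martingale property, and bounding $\E[\|M_j\|^2]\le c_0^{-1}n^{-1}\E[\|Ax_j^\delta-y^\delta\|^2]$ is precisely the recursion of Lemma \ref{lem:recursion-var} (imported from \cite{JinLu:2019}). From there, however, you take a genuinely different route: you invoke the smallest positive eigenvalue $\lambda_{\min}$ of $B$ on $\mathrm{Range}(B)$ to get exponential suppression of early terms, and you call this finite-dimensional feature essential. The paper never uses a spectral gap: it proves the polynomial-decay estimate of Proposition \ref{prop:sgd-residual}, $\E[\|B^s(x_k^\delta-\E[x_k^\delta])\|^2]\le c\,(k^{-\beta}+\delta^2k^{-\gamma})$, by induction on $k$ using only the dimension-free filter bounds of Lemmas \ref{lem:estimate-B} and \ref{lem:basicest1}, and then reads off Proposition \ref{prop:sgd-variance} by choosing $\beta>(1-\alpha)(\min(2p,r)+1)$ and $\gamma>0$. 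That route keeps every constant independent of $\lambda_{\min}$, which matters here because for discretized ill-posed problems $\lambda_{\min}$ is exceedingly small, so all your thresholds of the form ``for $\delta$ small enough'' degrade accordingly.

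Beyond this, two steps in your sketch are genuine gaps rather than bookkeeping. First, the early/late dichotomy is too coarse: for indices just outside your boundary layer, say $k-j\asymp Mk^{\alpha}$, one has $s_{j,k}\asymp c_0M=O(1)$, not $\gtrsim k^{1-\alpha}$, so the suppression factor there is merely a constant; the claim that the early contribution is $o(\delta^2)$ ``no matter how large the early residuals $R_j$ are'' fails in this intermediate regime, and even for $j\le k/2$ it is unjustified without an a priori sub-exponential bound on $R_j$, which you never establish (the crude pathwise bounds, e.g.\ \eqref{prop:dp-es:es1}, grow like $e^{ck^{1-\alpha}}$). Second, and more seriously, the one-shot Gronwall closure cannot deliver $o(\delta^2)$. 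The layer feedback is $O(k^{-\alpha}\log k)\cdot\max_{j\,\mathrm{late}}W_j$: the Gronwall step you describe yields at best uniform boundedness $W_j\le C$, and substituting that back gives only
\begin{equation*}
W_{\kappa(\delta)}\lesssim \delta^2+\kappa(\delta)^{-\alpha}\log\kappa(\delta),
\end{equation*}
where $\kappa(\delta)^{-\alpha}$ is $o(\delta^2)$ only if $\alpha>(1-\alpha)(\min(2p,r)+1)$ --- false whenever $\alpha$ is small. To beat this $k^{-\alpha}$ floor you must iterate the bootstrap finitely many times (each pass gains a factor $k^{-\alpha}\log k$) or, as the paper does, run the induction with the decaying ansatz $c\,(k^{-\beta}+\delta^2 k^{-\gamma})$ from the start; supplying that mechanism is exactly the content of Proposition \ref{prop:sgd-residual}, and it is the piece your argument is missing.
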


Now we can present the proof of Theorem \ref{thm:dp}.
\begin{proof}
Set $1<\tau^*<\tau$ and $\bar{k}(\delta)= [c\delta^{-\frac{2}{(1-\alpha)(\min(2p,r)+1)}}]+2$ ($[\cdot]$
denotes taking the integral part of a real number), with $c=\big(\frac{\tau^*-1}{\sqrt{n}c_p}\big)^{-\frac{2}{
(1-\alpha)(\min(2p,r)+1)}}$. By the definition of $k(\delta)$ in \eqref{eqn:dp}, the event $\mathcal{E}=
\{k(\delta)\leq \bar{k}(\delta)\}$ is given by
\begin{equation*}
  \mathcal{E} = \{\exists i\in \{1,\ldots,\bar{k}(\delta)\} \mbox{ such that }\|Ax_{i}^\delta-y^\delta\|\leq \tau\delta\}.
\end{equation*}
Thus, $\mathcal{E}\supset\{\|Ax_{\bar{k}(\delta)}^\delta-y^\delta\| \leq \tau\delta\}$. Consequently,
\begin{align*}
  \mathbb{P}(k(\delta)\leq \bar{k}(\delta)) &\ge\mathbb{P}(\|Ax_{\bar{k}(\delta)}^\delta-y^\delta\|\le \tau \delta)\\
   &\ge \mathbb{P}(\|A(x_{\bar k(\delta)}^\delta-\E[x_{\bar{k}(\delta)}^\delta])\| \le (\tau-\tau^*)\delta,~
  \| A\E[x_{\bar{k}(\delta)}^\delta]-y^\delta\| \le \tau^*\delta).
\end{align*}
By the choice of $\bar{k}(\delta)$, Proposition \ref{prop:dp-bdd} implies
\begin{equation*}
  \|A\E[x_{\bar k(\delta)}^\delta]-y^\delta\|\leq \tau^*\delta.
\end{equation*}
Consequently,
\begin{align*}
  \mathbb{P}(k(\delta)\leq \bar{k}(\delta)) & \geq \mathbb{P}(\| A(x_{\bar{k}(\delta)}^\delta-\E[x_{\bar{k}(\delta)}^\delta])\|\le (\tau-\tau^*)\delta)\\
     &=1-\mathbb{P}(\|A(x_{\bar{k}(\delta)}^\delta-\E[x_{\bar{k}(\delta)}^\delta])\| >(\tau-\tau^*)\delta).
\end{align*}
Meanwhile, by Chebyshev's inequality \cite[p. 233]{Feller:1968}, we have
\begin{equation*}
  \mathbb{P}(\|A(x_{\bar{k}(\delta)}^\delta-\E[x_{\bar{k}(\delta)}^\delta])\| >(\tau-\tau^*)\delta) \leq \frac{\E\|A(x_{\bar{k}(\delta)}^\delta-\E [x_{\bar{k}(\delta)}^\delta])\|^2}{(\tau-\tau^*)^2\delta^2}.
\end{equation*}
Therefore,
\begin{align*}
  \mathbb{P}(k(\delta)\leq \bar{k}(\delta)) &\ge 1 - \frac{\E\|A(x_{\bar{k}(\delta)}^\delta-\E[x_{\bar{k}(\delta)}^\delta])\|^2}{(\tau-\tau^*)^2\delta^2},
\end{align*}
which together with Proposition \ref{prop:sgd-variance} directly implies
\begin{align*}
  \mathbb{P}(k(\delta)\leq \bar{k}(\delta)) \to 1 \quad \mbox{as }\delta\to 0^+.
\end{align*}
This completes the proof of the theorem.
\end{proof}


\begin{remark}
The condition $r<1$ is related to an apparent saturation phenomenon with SGD: for any $p>\frac12$, the SGD iterate
$x_k^\delta$ with \textit{a priori} stopping can only achieve a convergence rate comparable with that for $p=\frac12$
in the setting of Assumption \ref{ass:sgd}, at least for the current analysis
\cite{JinLu:2019}. It remains unclear whether this is an intrinsic drawback of SGD or due to limitations of the proof technique.
\end{remark}

\begin{remark}\label{rm:thm1}
In practice, we prefer computing the residual with a frequency $\omega n \in\mathbb{N}$:
\begin{equation*}
k_\omega(\delta):= \min\{ \omega n k~:~k \in \mathbb{N}~,~\| A x^\delta_{\omega n k} - y^\delta\| \le \tau \delta\}.
\end{equation*}
Since one of the numbers $[c\delta^{-\frac{2}{(1-\alpha)(\min(2p,r)+1)}}]+2,....,[c\delta^{-\frac{2}{(1-\alpha)(\min(2p,r)+1)}}]+\omega n+1$ is of the form $\omega n k$, with $k\in\mathbb{N}$, there holds
\begin{equation*}
 \mathbb{P}\left( k_\omega(\delta)\le c \delta^{-\frac{2}{(1-\alpha)(\min\left(2p,r\right)+1)}}+\omega n+1\right)\to 1 \quad\mbox{as }\delta \to 0^+.
\end{equation*}
That is, the upper bound on the stopping index remains largely valid for a variant of the
discrepancy  principle \eqref{eqn:dp} evaluated with a given frequency.
\end{remark}

{\begin{remark}\label{rm:thm:dp}
The finite-iteration termination property in Theorem \ref{thm:dp} relies heavily on the assumption $\alpha<1$
in the definition of the stepsize schedule. Without this condition, Theorem \ref{thm:dp} {\rm(}and thus also the
convergence in probability{\rm)} generally do not hold.
Indeed, if $\mathrm{rank}(A)\ge 2$, $y^\dagger\neq 0$ and $\alpha>1$, then there holds
\begin{equation}\label{eqn:ass-finite}
  \liminf_{\delta\to 0^+}\mathbb{P}(k(\delta)=\infty)>0.
\end{equation}
To prove this assertion, let $k^*\in \mathbb{N}$ be such that $\eta_k\|A\|^2\le \frac12$ for all $k\ge k^*$.
Since $\mathrm{rank}(A)\ge 2$ and $y^\dagger\neq0$, there exists
an index $j\in\{1,\ldots, n\}$ such that $y^\dagger \notin\mathrm{span}( Aa_j)$. In view of the fact
$Ax_k \chi_{\{i_1=\ldots=i_{k^*-1}=j\}} \in \mathrm{span}( Aa_j)$, for $k\in\{1,...,k^*\}$, there exists an
$\eta>0$ with
\begin{equation*}
 \mathbb{P}\left( \|Ax_k-y^\dagger\|\ge \eta,~\forall k\le k^*\right)\ge \mathbb{P}\left( i_1=...=i_{k^*-1}=j\right)>0.
\end{equation*}
Meanwhile for $k> k^*$, similiar to \eqref{prop:dp-es:es1} below, there holds
\begin{align*}
\|Ax_k-y^\dagger\| &\ge \| Ax_{k-1}-y^\dagger\| - \eta_{k-1}|(Ax_{k-1}-y^\dagger,e_{i_{k-1}})| \|AA^te_{i_{k-1}}\|\\
 &\ge ... \ge \| Ax_{k^*}-y^\dagger\| \prod_{i=k^*}^{k-1}(1-\eta_i\|A\|^2).
\end{align*}
Using the elementary inequalities $1+x\le e^x$ for all $x\in\R$ and
$1+x\ge e^{x-x^2}$ for all $x\in [-\frac12,0]$ and the estimate
\eqref{prop:dp-es:es1} below, we deduce
\begin{align*}
\|A x_k^\delta -y^\delta\| & \ge \| A x_k-y^\dagger\| - \|A(x_k-x_k^\delta)-(y^\dagger-y^\delta)\|\\
  &\ge \|Ax_{k^*}-y^\dagger\| \prod_{i=k^*}^{k-1}(1-\|A\|^2\eta_i) - \delta \prod_{i=1}^{k-1}(1+\|A\|^2\eta_i)\\
&\ge \|Ax_{k^*}-y^\dagger\| \exp\Big(-c_0\|A\|^2\sum_{i=k^*}^{k-1}i^{-\alpha} - c_0^2\|A\|^4\sum_{i=k^*}^{k-1} i^{-2\alpha}\Big) - \delta \exp\Big(c_0\|A\|^2\sum_{i=1}^{k-1}i^{-\alpha}\Big)\\
&\ge c^{\prime}\|A x_{k^*}-y^\dagger\| - c''\delta,
\end{align*}
with
\begin{equation*}
  c':=e^{-c_0\|A\|^2\sum_{i=1}^{\infty}i^{-\alpha} - c_0^2\|A\|^4\sum_{i=1}^{\infty} i^{-2\alpha}}>0 \quad
 \mbox{and}\quad c'':=e^{c_0\|A\|^2\sum_{i=1}^\infty i^{-\alpha}}<\infty.
\end{equation*}
So for small enough $\delta>0$, there holds
\begin{equation*}
\|Ax_k^\delta-y^\delta\|\chi_{\{\|Ax_i-y^\dagger\|\ge \eta,~\forall i\le k^*\}} \ge c'\eta - c''\delta > \tau \delta.
\end{equation*}
Consequently,
\begin{equation*}
\liminf_{\delta>0}\mathbb{P}(k(\delta)=\infty) \ge \mathbb{P}\left( \|Ax_i - y^\dagger\|\ge \eta,~\forall i\le k^*\right)>0.
\end{equation*}
This shows the assertion \eqref{eqn:ass-finite}.
\end{remark}}

\section{The proof of Theorem \ref{thm:dp:conv}}\label{sec:dp:conv}

In this section, we prove Theorem \ref{thm:dp:conv}. It employs the following proposition,
which states that potential early stopping actually does not cause any problem.
\begin{proposition}\label{prop:dp-es}
For all $\varepsilon>0$, there is a sequence $(k_\delta^-)_\delta$ with $k_\delta^-\to \infty$ for $\delta \to 0^+$, such that
\begin{equation*}
  \| x_{k(\delta)}^\delta - x^\dagger \|\chi_{\{k(\delta)\le k_\delta^-\}} \le \varepsilon
\end{equation*}
for $\delta>0$ small enough.
\end{proposition}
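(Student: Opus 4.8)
The plan is to exploit the one genuinely stabilizing feature of the present setting, namely its finite dimensionality: since the problem is posed on $\mathbb{R}^m$, the matrix $A$ is bounded below on the orthogonal complement of its kernel, and this lets us convert the smallness of the residual (which the discrepancy principle \eqref{eqn:dp} enforces at the stopping index) into smallness of the reconstruction error, uniformly in how early the stopping occurs.

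First I would record that every SGD iterate lives in a fixed subspace. Because $x_1=0$ and each update in \eqref{eqn:sgd} adds a multiple of a row $a_{i_k}$, an induction gives $x_k^\delta\in\mathrm{span}\{a_1,\dots,a_n\}=\mathrm{Range}(A^T)=(\ker A)^\perp$ for every $k$. Moreover, by Assumption \ref{ass:sgd}(ii) with $x_1=0$ we have $x^\dagger=B^pw$ with $B=n^{-1}A^TA$, and in finite dimensions $\mathrm{Range}(B^p)=\mathrm{Range}(A^T)$, so $x^\dagger\in(\ker A)^\perp$ as well. Hence $x_k^\delta-x^\dagger\in(\ker A)^\perp$ for all $k$, and on this subspace $A$ is bounded below: writing $\sigma>0$ for the smallest nonzero singular value of $A$, we have $\|Av\|\ge\sigma\|v\|$ for every $v\in(\ker A)^\perp$.

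Next I would bring in the stopping rule. I choose any sequence $k_\delta^-\to\infty$ as $\delta\to0^+$ (its precise growth is irrelevant here and is dictated only by the complementary regime $\{k(\delta)>k_\delta^-\}$ treated in the proof of Theorem \ref{thm:dp:conv}). On the event $\{k(\delta)\le k_\delta^-\}$ the stopping index is finite, so by the definition of the discrepancy principle $\|Ax_{k(\delta)}^\delta-y^\delta\|\le\tau\delta$; combining this with $\|y^\delta-y^\dagger\|=\delta$ and $y^\dagger=Ax^\dagger$ gives
\begin{equation*}
\|A(x_{k(\delta)}^\delta-x^\dagger)\|=\|Ax_{k(\delta)}^\delta-y^\dagger\|\le\|Ax_{k(\delta)}^\delta-y^\delta\|+\|y^\delta-y^\dagger\|\le(\tau+1)\delta .
\end{equation*}
Feeding this into the lower bound of the previous paragraph yields, on $\{k(\delta)\le k_\delta^-\}$,
\begin{equation*}
\|x_{k(\delta)}^\delta-x^\dagger\|\le\sigma^{-1}\|A(x_{k(\delta)}^\delta-x^\dagger)\|\le\sigma^{-1}(\tau+1)\delta .
\end{equation*}
Multiplying by the indicator and taking $\delta\le\sigma\varepsilon/(\tau+1)$ gives $\|x_{k(\delta)}^\delta-x^\dagger\|\chi_{\{k(\delta)\le k_\delta^-\}}\le\varepsilon$, as claimed.

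The step I expect to be the crux — and the one to justify with care — is the passage from a small residual to a small error, i.e. the inequality $\|v\|\le\sigma^{-1}\|Av\|$ on $(\ker A)^\perp$. It is exactly here that finite dimensionality is used: $\sigma^{-1}$ is finite but problem dependent (it encodes the conditioning of $A$), which is precisely why this argument delivers convergence but no source-condition-dependent rate. Two points must be verified: that the iterates never leave $(\ker A)^\perp$ (the induction above), and that the discrepancy inequality is available, which needs $k(\delta)<\infty$ — guaranteed on $\{k(\delta)\le k_\delta^-\}$ but not in general, cf. the non-termination phenomenon of Remark \ref{rm:thm:dp} for $\alpha>1$. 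A slightly more robust variant, better aligned with the regularization-theoretic spirit of the paper, would instead compare with the exact-data run: one chooses $k_\delta^-\to\infty$ slowly enough that $\delta(k_\delta^-)^{1-\alpha}\to0$, so that the pathwise noise-propagation bound $\|x_k^\delta-x_k\|\le\delta\,(\max_j\|a_j\|)\sum_{i<k}\eta_i$ — obtained from $\|I-\eta_k a_{i_k}a_{i_k}^T\|\le1$ under Assumption \ref{ass:sgd}(i) — is uniformly small on $\{k\le k_\delta^-\}$, deduces $\|Ax_{k(\delta)}-y^\dagger\|\to0$, and then applies the same injectivity bound to the exact-data iterate $x_{k(\delta)}$; this makes the role of $k_\delta^-\to\infty$ explicit.
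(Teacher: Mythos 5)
Your proof is correct within the paper's finite-dimensional setting, but it takes a genuinely different route from the paper's own argument. The paper never invokes the smallest nonzero singular value of $A$: it first proves the pathwise noise-propagation bounds \eqref{prop:dp-es:es1}--\eqref{prop:dp-es:es2} by induction, then uses the fact that the first $K$ exact-data iterates have only finitely many realisations to extract a uniform gap $\eta>0$ as in \eqref{prop:dp-es:min}, concludes that for small $\delta$ the event $\{k(\delta)\le K\}$ can occur only if the exact-data iterate satisfies $\|Ax_{k(\delta)}-y^\dagger\|=0$ exactly --- whence $x_{k(\delta)}=x^\dagger$ by qualitative injectivity of $A$ on $\mathcal{R}(A^t)$ --- and finally bounds $\|x_{k(\delta)}^\delta-x_{k(\delta)}\|\le C_K\,\delta\to0$; a diagonal argument over $K$ then produces $k_\delta^-$. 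You replace all of this by the quantitative coercivity $\|Av\|\ge\sigma\|v\|$ on $(\ker A)^\perp$ together with the discrepancy inequality at $k(\delta)$, which is shorter and formally stronger: it yields $\|x_{k(\delta)}^\delta-x^\dagger\|\le\sigma^{-1}(\tau+1)\delta$ on the whole event $\{k(\delta)<\infty\}$, so the sequence $k_\delta^-$ plays no role, and the dependent analogue of Theorem \ref{thm:dp:conv} (for the run defining $k(\delta)$) would follow at once from Theorem \ref{thm:dp}, even with an $O(\delta)$ rate. The price of this shortcut is that the constant $\sigma^{-1}$ encodes the full ill-conditioning of $A$: your smallness threshold $\delta\le\sigma\varepsilon/(\tau+1)$ degenerates as the discretisation is refined, and the coercivity step fails outright for operators with non-closed range, i.e. in precisely the ill-posed regime that regularization theory targets and that the paper's conditioning-free constants (only $\|A\|$, the stepsizes, and the path-dependent gap $\eta$ appear) are designed to respect. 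Note finally that both proofs read the proposition in the ``dependent'' sense --- they use that $\|Ax_{k(\delta)}^\delta-y^\delta\|\le\tau\delta$ holds for the same iterates that define $k(\delta)$ --- so on that delicate point you are on equal footing with the paper.
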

\begin{proof}
It suffices to show that for all $K\in\mathbb{N}$
 \begin{equation}\label{prop:dp-es:lim}
  \| x_{k(\delta)}-x^\dagger\|\chi_{\{k(\delta)\le K \}}\to 0 \quad \mbox{as } \delta\to0^+.
 \end{equation}
In order to show this, we need the following two estimates for the iterated noise:
\begin{align}
 \|A(x_k^\delta-x_k)-(y^\delta-y^\dagger)\| &\le \delta \prod_{j=1}^{k-1}(1+\eta_j\| A \|^2),\label{prop:dp-es:es1}\\
 \|x_k^\delta-x_k\| &\le \delta \|A\|\sum_{j=1}^{k-1}\eta_j \prod_{i=1}^{j-1}(1+\eta_i\|A\|^2),\label{prop:dp-es:es2}
 \end{align}
with the conventions $\sum_{j=1}^0=0$ and $\prod_{j=1}^0 = 1$. We prove the estimates \eqref{prop:dp-es:es1} and \eqref{prop:dp-es:es2} by
mathematical induction. Note that $a_i=A^te_i$. For the estimate \eqref{prop:dp-es:es1}, by the triangle inequality and the
defining relation \eqref{eqn:sgd} of SGD iteration,
\begin{align*}
  &\quad\|A(x_{k+1}^\delta-x_{k+1})-(y^\delta-y^\dagger)\| \\
  &\le\|A(x_k^\delta-x_k)-(y^\delta-y^\dagger)\| + \eta_k \|\left( A(x_k^\delta-x_k)-(y^\delta-y^\dagger),e_{i_k}\right)AA^te_{i_k}\|\\
   &\le \| A(x_k^\delta-x_k)-(y^\delta-y^\dagger)\|\left(1+\eta_k \|A\|^2\right),
 \end{align*}
and since $x_1=x_1^\delta$, $\| A(x_1^\delta-x_1)-(y^\delta-y^\dagger)\|=\|y^\delta - y^\dagger\| \le \delta$.
For the estimate \eqref{prop:dp-es:es2}, we have $\| x_1^\delta - x_1\|=0$ and
\begin{equation*}
  \|x_{k+1}^\delta -x_{k+1}\| \le \|x_k^\delta-x_k\| + \eta_k \| A \| \|A(x_k^\delta-x_k)-(y^\delta-y^\dagger)\|,
 \end{equation*}
so the claim follows using the estimate \eqref{prop:dp-es:es1}.
Now, for each fixed $K$, since there are only finitely many different realisations of the first
$K$ SGD iterates, there is a (deterministic) $\eta>0$, which depends on $K$, such that
\begin{equation}\label{prop:dp-es:min}
 \min_{k=1,...,K}\left(\| A x_k-y^\dagger \| - \eta\right)\chi_{\{\|Ax_k-y^\dagger\|>0\}}\ge 0,
\end{equation}
where without loss of generality, we have assumed $y^\dagger\neq 0$. Therefore, using
estimates \eqref{prop:dp-es:es1} and \eqref{prop:dp-es:min},
\begin{align*}
  &\quad\|Ax_k^\delta-y^\delta\|\chi_{\{\|Ax_k-y^\dagger\|>0\}} \\
  &\ge \|Ax_k-y^\dagger\|\chi_{\{\|Ax_k-y^\dagger\|>0\}} -\|A(x_k-x_k^\delta)-(y^\dagger-y^\delta)\|\chi_{\{\|Ax_k-y^\dagger\|>0\}}\\
  &\ge \Big(\eta - \delta \prod_{j=1}^{k-1}(1+\eta_j\|A\|^2)\Big)\chi_{\|Ax_k-y^\dagger\|>0\}}> \tau \delta\chi_{\{\|Ax_k-y^\dagger\|>0\}},
\end{align*}
for any $\delta<\frac{\eta}{\tau+\prod_{j=1}^{K-1} (1+\eta_j\|A\|^2)}$. Then by the definition
of the discrepancy principle in \eqref{eqn:dp}, this implies
\begin{equation*}
   \{ k(\delta)\le K\} \subset \{\|Ax_{k(\delta)}-y^\dagger\| = 0\}
  \end{equation*}
  for $\delta>0$ small enough. Meanwhile, since by construction $x_{k(\delta)} \in \mathcal{R}(A^t)=\mathcal{N}(A)^\perp$,
$\|Ax_{k(\delta)}-y^\dag\|=0$ implies $x_{k(\delta)}=x^\dagger$, the minimum norm solution.
The proof of \eqref{prop:dp-es:lim} is concluded by
\begin{align*}
   \|x_{k(\delta)}^\delta -x^\dagger\|\chi_{\{k(\delta)\le K\}} &= \| x_{k(\delta)}^\delta -x_{k(\delta)}\|\chi_{\{k(\delta)\le K\}}\\
                     &\le \delta \|A\| \sum_{j=1}^{K-1} \eta_j \prod_{i=1}^{j-1}(1+\eta_i \|A\|^2) \to 0
\end{align*}
for $\delta\to0^+$, where we have used estimate \eqref{prop:dp-es:es2}.
This completes the proof of the proposition.
\end{proof}

Now we can state the proof of Theorem \ref{thm:dp:conv}.
\begin{proof}[Proof of Theorem \ref{thm:dp:conv}]
Fix $\varepsilon>0$. Proposition \ref{prop:dp-es} and Theorem \ref{thm:dp} guarantee the existence of
two sequences $(k_\delta^-)_\delta,(k_\delta^+)_\delta$, with $k_\delta^- \le k_\delta^+\le c
\delta^{-\frac{2}{(1-\alpha)(\min\left(2p,r\right)+1)}}$, $k_\delta^-\to \infty$ for $\delta\to0^+$
and
\begin{equation*}
 \|x_{k(\delta)}^\delta-x^\dagger\|\chi_{\{k(\delta)\le k^-_\delta\}} \le \varepsilon\quad\mbox{for }\delta\mbox{ small enough }
\end{equation*}
and
\begin{equation*}
  \mathbb{P}\left(k(\delta)\le k^+_\delta\right)\to 1\quad \mbox{for }\delta \to 0^+.
\end{equation*}
Consequently, for $\delta>0$ small enough, there holds
\begin{align*}
 &\quad \mathbb{P}(\|x_{k(\delta)}^\delta- x^\dagger \| > \varepsilon) \\
 &= \mathbb{P}(\|x_{k(\delta)}^\delta - x^\dagger\| >\varepsilon, k(\delta)\le k^-_\delta) + \mathbb{P}(\|x_{k(\delta)}^\delta -x^\dagger\| >\varepsilon, k(\delta)>k^-_\delta)\\
                                                                &=\mathbb{P}(\|x_{k(\delta)} - x^\dagger\| >\varepsilon, k(\delta)>k^-_\delta)\\
                                                                &=\mathbb{P}(\|x_{k(\delta)}-x^\dagger\| >\varepsilon, k^-_\delta<k(\delta)\le k^+_\delta) + \mathbb{P}(\|x_{k(\delta)}-x^\dagger\| >\varepsilon, k(\delta)>k^+_\delta)\\
                                                                &\le\mathbb{P}(\|x_{k(\delta)}-x^\dagger\| >\varepsilon, k^-_\delta<k(\delta)\le k^+_\delta) + \mathbb{P}(k(\delta)>k^+_\delta).
\end{align*}
In view of Theorem \ref{thm:dp}, it remains to show that
\begin{equation*}
  \mathbb{P}(\|x_{k(\delta)}-x^\dagger\| >\varepsilon , k_\delta^-<k(\delta)\le k^+_\delta)\to 0\quad\mbox{ for }\delta\to 0^+.
\end{equation*}
To this end, let $\Omega_\delta:=\{ k^-_\delta\le k(\delta)\le k^+_\delta\}$ and
we split the error into three parts in a customary way: approximation error, data propagation error
and stochastic error. Specifically, by the triangle inequality, there are constants $c_1$ and $c_2$ such that
\begin{align*}
 \| x_{k(\delta)}^\delta-x^\dagger \|\chi_{\Omega_\delta} &= \sum_{k=k^-_\delta}^{k^+_\delta} \| x_k^\delta - x^\dagger\| \chi_{\{k(\delta)=k\}}\\
                                      &\le \sum_{k=k^-_\delta}^{k^+_\delta}\left( \| \E[x_{k}]-x^\dagger\| + \|\E[x_{k}]-\E[x_{k}^\delta]\| + \| x_{k}^\delta - \E [x_{k}^\delta]\|\right)\chi_{\{k(\delta)=k\}}\\
                                      &\le \sum_{k=k^-_\delta}^{k^+_\delta} \left( c_1 (k-1)^{-(1-\alpha)p} + c_2 \delta (k-1)^\frac{1-\alpha}{2} + \|x_k^\delta -\E[x_k^\delta]\|\right)\chi_{\{k(\delta)=k\}}\\
                                      &\le c_1\left(k^-_\delta-1\right)^{-(1-\alpha)p} + c_2 \delta \left(k^+_\delta-1\right)^\frac{1-\alpha}{2} + \sum_{k=k^-_\delta}^{k^+_\delta} \| x_k^\delta - \E[x_k^\delta]\|\chi_{\{k(\delta)=k\}},
\end{align*}
where we have used \cite[Theorem 3.2]{JinLu:2019} and Lemma \ref{thm:JinLu} below in the third line. The first two terms
clearly tend to $0$ for $\delta\to 0^+$ (since $k_\delta^-\to\infty$, and
$\delta (k_\delta^+)^\frac{1-\alpha}{2}\to0$, in view of Theorem \ref{thm:dp}).
By Markov's inequality \cite[p. 242]{Feller:1968} and the independence assumption between $k(\delta)$ and $x_{k(\delta)}^\delta$,
\begin{align*}
   \mathbb{P}\left( \sum_{k=k^-_\delta}^{k^+_\delta} \| x_k^\delta-\E[x_k^\delta]\| \chi_{\{k(\delta)=k\}} >\varepsilon'\right)
   &\le \frac{\sum_{k=k^-_\delta}^{k^+_\delta}\E\left[ \| x_k^\delta-\E[x_k^\delta]\| \chi_{\{k(\delta)=k\}}\right]}{\varepsilon'}\\
   & = \frac{\sum_{k=k^-_\delta}^{k^+_\delta}\E\left[ \| x_k^\delta-\E[x_k^\delta]\|\right] \mathbb{P}\left(k(\delta)=k\right)}{\varepsilon'}.
\end{align*}
Now Jensen's inequality and Proposition
\ref{prop:sgd-residual} below (with $s=0$, $\gamma<\min(\alpha,1-\alpha)$ and $\beta<1-\alpha$) give
\begin{align*}
 \mathbb{P}\left( \sum_{k=k^-_\delta}^{k^+_\delta} \| x_k^\delta-\E[x_k^\delta]\| \chi_{\{k(\delta)=k\}} >\varepsilon'\right)
  &\le \frac{\sum_{k=k^-_\delta}^{k^+_\delta}\sqrt{\E\left[ \| x_k^\delta-\E[x_k^\delta]\|^2\right]} \mathbb{P}\left(k(\delta)=k\right)}{\varepsilon'}\\
  &\le \frac{\sqrt{c((k^-_\delta)^{-\beta}+\delta^2 (k^-_\delta)^{-\gamma})} \sum_{k=k^-_\delta}^{k^+_\delta}\mathbb{P}\left(k(\delta)=k\right)}{\varepsilon'}\\
  &= \frac{\sqrt{c((k^-_\delta)^{-\beta}+\delta (k^-_\delta)^{-\gamma})} \mathbb{P}\left(\Omega_\delta\right)}{\varepsilon'}\to 0
\end{align*}
as $\delta\to0^+$. Thus it follows that
\begin{equation*}
 \mathbb{P}\left( \|x_k(\delta)-x^\dagger\|>\varepsilon, k_\delta^-<k(\delta)\le k^+_\delta\right)\to 0
\end{equation*}
as $\delta\to0^+$. This completes the proof of the theorem.
\end{proof}
\begin{remark}\label{rm:thm2}
Clearly, with $k_\omega(\delta)$ given as in Remark \ref{rm:thm1}, there holds $\mathbb{P}\left(\|x_{k_\omega(\delta)}-x^\dagger\| \ge \varepsilon\right)\to 0$ for $\delta\to 0^+$. That is, the convergence remains valid for the variant of the discrepancy  principle \eqref{eqn:dp} evaluated with a frequency.
\end{remark}

\section{The proofs of Propositions \ref{prop:dp-bdd} and \ref{prop:sgd-variance}}\label{sec:auxil}
In this part, we prove Propositions \ref{prop:dp-bdd} and \ref{prop:sgd-variance}, which are used in the proof of the
Theorems \ref{thm:dp} and \ref{thm:dp:conv}. We shall use the following result from \cite[Theorem 3.1]{JinLu:2019}
frequently. Note that $\|B^\frac12 (x_k-x^\dagger)\| = \|Ax_k-y^\dagger\|/\sqrt{n}$.
\begin{lemma}\label{thm:JinLu}
Let Assumption \ref{ass:sgd} be fulfilled, then for $s\in\{0,\frac12\}$ and $c_{p,s}:=\left(\frac{
(p+s)(1-\alpha)}{c_0e(2^{1-\alpha}-1)}\right)^{p+s} \| w\|$, there holds
\begin{equation*}
\| B^s(x_{k+1}-x^\dagger)\| \le c_{p,s} k^{-(p+s)(1-\alpha)}.
\end{equation*}
\end{lemma}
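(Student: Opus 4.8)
The plan is to recognize that the quantity in the statement is governed by the \emph{mean} iterate, so that I read $x_k$ as $\E[x_k]$, which by \eqref{eqn:sgd=lm} (applied to exact data $y^\dagger$) is exactly a Landweber iterate. Indeed, since $\bar A^t\bar y^\dagger = n^{-1}A^tAx^\dagger = Bx^\dagger$, subtracting $x^\dagger$ from the recursion gives the clean linear error recursion $\E[x_{k+1}]-x^\dagger = (I-\eta_k B)(\E[x_k]-x^\dagger)$. Iterating from $\E[x_1]-x^\dagger = x_1-x^\dagger = -B^p w$ (Assumption \ref{ass:sgd}(ii)) yields the closed form $B^s(\E[x_{k+1}]-x^\dagger) = -\big(\prod_{j=1}^k(I-\eta_j B)\big)B^{p+s}w$. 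Taking norms and using the submultiplicativity bound $\|B^s(\E[x_{k+1}]-x^\dagger)\|\le \big\|\prod_{j=1}^k(I-\eta_j B)B^{p+s}\big\|\,\|w\|$ reduces the lemma, since $c_{p,s}=\big(\frac{(p+s)(1-\alpha)}{c_0e(2^{1-\alpha}-1)}\big)^{p+s}\|w\|$, to the purely spectral estimate
\begin{equation*}
\Big\|\prod_{j=1}^k(I-\eta_j B)B^{p+s}\Big\|\le \Big(\frac{(p+s)(1-\alpha)}{c_0e(2^{1-\alpha}-1)}\Big)^{p+s}k^{-(p+s)(1-\alpha)}.
\end{equation*}

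Next I would diagonalize: $B$ is symmetric positive semidefinite with eigenvalues $\lambda\in[0,\|B\|]$, so by spectral calculus the operator norm above equals $\sup_{\lambda\in[0,\|B\|]}\lambda^{p+s}\prod_{j=1}^k(1-\eta_j\lambda)$. Before estimating I must check the filter factors lie in $[0,1]$: from $\|B\|=n^{-1}\|A\|^2\le n^{-1}\sum_{i=1}^n\|a_i\|^2\le \max_j\|a_j\|^2\le c_0^{-1}$ and $\eta_j\le\eta_1=c_0$, one gets $0\le\eta_j\lambda\le c_0\|B\|\le1$. Then the elementary inequality $1-x\le e^{-x}$ gives $\prod_{j=1}^k(1-\eta_j\lambda)\le e^{-\lambda\sum_{j=1}^k\eta_j}$, and optimizing the scalar map $\lambda\mapsto\lambda^{\mu}e^{-\lambda T}$ (with $\mu=p+s$, $T=\sum_{j=1}^k\eta_j$) over $\lambda\ge0$ — maximizer $\lambda=\mu/T$, value $(\mu/(eT))^\mu$ — yields
\begin{equation*}
\sup_{\lambda}\lambda^{p+s}\prod_{j=1}^k(1-\eta_j\lambda)\le\Big(\frac{p+s}{e}\Big)^{p+s}\Big(\sum_{j=1}^k\eta_j\Big)^{-(p+s)}.
\end{equation*}

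It remains to lower-bound the stepsize sum. Here $\sum_{j=1}^k\eta_j=c_0\sum_{j=1}^k j^{-\alpha}\ge c_0\int_1^{k+1}t^{-\alpha}\,dt=c_0\frac{(k+1)^{1-\alpha}-1}{1-\alpha}$, and I would then establish $(k+1)^{1-\alpha}-1\ge(2^{1-\alpha}-1)k^{1-\alpha}$ for all $k\ge1$: the two sides agree at $k=1$, and differentiating shows the left minus right is nondecreasing because $(1+1/k)^{-\alpha}$ increases to $1$ and already exceeds $2^{1-\alpha}-1$ at $k=1$. This gives $\sum_{j=1}^k\eta_j\ge c_0\frac{2^{1-\alpha}-1}{1-\alpha}k^{1-\alpha}$, hence $\big(\sum_{j=1}^k\eta_j\big)^{-(p+s)}\le\big(\frac{1-\alpha}{c_0(2^{1-\alpha}-1)}\big)^{p+s}k^{-(p+s)(1-\alpha)}$; multiplying by $(\frac{p+s}{e})^{p+s}$ and $\|w\|$ recovers $c_{p,s}k^{-(p+s)(1-\alpha)}$ exactly. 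The routine parts are the $\lambda^\mu e^{-\lambda T}$ optimization and the integral comparison; the main obstacle is matching the stated constant, which forces the partial-sum bound to be sharp at $k=1$ (pinning the precise $2^{1-\alpha}-1$ factor) and requires verifying the spectral condition $c_0\|B\|\le1$ so the filter factors stay in $[0,1]$, without which neither $1-x\le e^{-x}$ nor the spectral-norm identity would apply.
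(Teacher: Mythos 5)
Your proof is correct, but note that the paper itself does not prove this lemma: it imports it verbatim from \cite[Theorem 3.1]{JinLu:2019}, so what you have produced is a self-contained reconstruction of the cited result. Your reconstruction is the right one and recovers the stated constant $c_{p,s}$ exactly: the mean iterate for exact data satisfies the Landweber recursion $\E[x_{k+1}]-x^\dagger=(I-\eta_kB)(\E[x_k]-x^\dagger)$ by \eqref{eqn:sgd=lm}, the source condition turns the error into the filtered element $-\prod_{j=1}^k(I-\eta_jB)B^{p+s}w$, the filter is bounded by $\big(\tfrac{p+s}{e}\big)^{p+s}\big(\sum_{j=1}^k\eta_j\big)^{-(p+s)}$ (this is precisely the paper's Lemma \ref{lem:estimate-B}, which you re-derive inline via the $\lambda^{\mu}e^{-\lambda T}$ optimization), and the lower bound $\sum_{j=1}^k\eta_j\ge c_0\tfrac{2^{1-\alpha}-1}{1-\alpha}k^{1-\alpha}$, sharp at $k=1$, pins down the $2^{1-\alpha}-1$ factor. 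Two points in your write-up deserve emphasis. First, your decision to read $x_k$ as $\E[x_k]$ is not merely convenient but necessary: a deterministic decaying bound cannot hold pathwise for the random exact-data iterate (with positive probability the sampled index is constant over the first $K$ steps, leaving error components orthogonal to that row untouched), and the paper indeed only ever invokes the lemma for the mean iterate, e.g., in \eqref{eqn:appbdd} and inside the proof of Lemma \ref{lem:recursion-var}. Second, a small imprecision: $1-x\le e^{-x}$ holds for all real $x$; what the verification $\eta_j\|B\|\le c_0\|B\|\le 1$ actually buys is nonnegativity of the filter factors, so that the absolute value of the product (which is what the spectral norm sees) is controlled by $e^{-\lambda\sum_{j}\eta_j}$. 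This does not affect the validity of your argument.
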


\subsection{The proof of Proposition \ref{prop:dp-bdd}}
\begin{proof}
We may assume $k^*>2$. By the definition of $k^*(\delta)$ and the triangle inequality
\begin{align}
  \tau^* \delta &\le \|A \E[ x_{k^*-1}^\delta]-y^\delta\|\nonumber\\
   &\le \|A\E[x_{k^*-1}]-y^\dagger\| + \|A\E[x_{k^*-1}^\delta-x_{k^*-1}]+(y^\dagger-y^\delta)\|.\nonumber
\end{align}
By Lemma \ref{thm:JinLu}, the term $\|A\E[x_{k^*-1}]-y^\dag\|$ is bounded by
\begin{equation}\label{eqn:appbdd}
  \|A\E[x_{k^*-1}]-y^\dagger\| \leq  c_{p} (k^*-2)^{-(p+\frac12)(1-\alpha)},\quad \mbox{with  }
  c_p=\sqrt{n}c_{p,\frac12}.
\end{equation}
Next we claim
\begin{equation}\label{eqn:resbdd}
  \|A\E[x_{k^*-1}^\delta-x_{k^*-1}]+(y^\dagger-y^\delta)\|\leq \delta.
\end{equation}
Combining \eqref{eqn:appbdd} with \eqref{eqn:resbdd} immediately implies the desired assertion.
It remains to show the claim \eqref{eqn:resbdd}. To this end, we employ the filter of the Landweber
method. The relation (\ref{eqn:sgd=lm}) implies that $\E[x_k^\delta]$
satisfies the following recursion
\begin{align*}
  A\E[x_{k+1}^\delta]-y^\delta &= \left(I-\frac{\eta_k}{n}AA^t\right)\left(A\E[x_k^{\delta}]-y^\delta\right).
\end{align*}
Using this yields
\begin{equation}\label{eqn:dp-bdd0}
A\E[x_k^\delta]-y^\delta = \prod_{j=1}^{k-1}\left(I-\frac{\eta_j}{n}AA^t\right)\left(Ax_1-y^\delta\right),
\end{equation}
and consequently, by the choice of $c_0$,
\begin{equation}\label{eqn:dp-bdd}
 \|A\E[x_{k}^\delta-x_{k}]+(y^\dagger-y^\delta)\| =\left\| \prod_{j=1}^{k-1}\left(I-\frac{\eta_j}{n}AA^t\right)\left(y^\dagger-y^\delta\right)\right\|\le \delta.
\end{equation}
This completes the proof of the proposition.
\end{proof}

\subsection{Proof of Proposition \ref{prop:sgd-variance}}
The proof of Proposition \ref{prop:sgd-variance} employs several technical estimates \cite{JinLu:2019}.
\begin{lemma}\label{lem:estimate-B}
For any $j<k$, and any symmetric and positive semidefinite operator $S$ and stepsizes $\eta_j\in(0,\|S\|^{-1}]$ and $p\geq 0$, there holds
\begin{equation*}
  \|\prod_{i=j}^k(I-\eta_iS)S^p\|\leq \frac{p^p}{e^p(\sum_{i=j}^k\eta_i)^p}.
\end{equation*}
\end{lemma}


Next we recall two useful estimates taken from \cite{JinLu:2019}.
\begin{lemma}\label{lem:basicest1}
For $\eta_j=\eta_0j^{-\alpha}$ with $\alpha\in(0,1)$, $\beta\in[0,1]$ and $r\geq0$, there hold
\begin{align*}
  \sum_{j=1}^{[\frac k2]}\frac{\eta_j^2}{(\sum_{\ell=j+1}^k\eta_\ell)^r}j^{-\beta} &\leq c_{\alpha,\beta,r}k^{-r(1-\alpha)+\max(0,1-2\alpha-\beta)},\\
  \sum_{j=[\frac k2]+1}^{k-1}\frac{\eta_j^2}{(\sum_{\ell=j+1}^k\eta_\ell)^r}j^{-\beta}&\leq c'_{\alpha,\beta,r}k^{-((2-r)\alpha+\beta)+\max(0,1-r)},
\end{align*}
where we slightly abuse the notation $k^{-\max(0,0)}$ for $\ln k$, and $c_{\alpha,\beta,r}$ and $c'_{\alpha,\beta,r}$ are given by
\begin{align*}
   c_{\alpha,\beta,r} & = 2^{r}\eta_0^{2-r}\left\{\begin{array}{ll}
     \frac{2\alpha+\beta}{2\alpha+\beta-1}, & 2\alpha+\beta >1,\\
     2, & 2\alpha +\beta =1,\\
     \frac{2^{2\alpha+\beta-1}}{1-2\alpha-\beta}, & 2\alpha+\beta <1,
   \end{array}\right.\quad\mbox{and}\quad
   c'_{\alpha,\beta,r}  = 2^{2\alpha+\beta}\eta_0^{2-r}\left\{\begin{array}{ll}
     \frac{r}{r-1}, & r>1,\\
     2, & r= 1,\\
     \frac{2^{r-1}}{1-r}, & r<1.
   \end{array}\right.
\end{align*}
\end{lemma}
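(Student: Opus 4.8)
The plan is to prove both estimates by the same two-step recipe: first bound the partial-sum denominator $\sum_{\ell=j+1}^k \eta_\ell$ from below by an explicit power of $k$ (times a power of $k-j$ in the second case), and then reduce the remaining sum over $j$ to a one-parameter $p$-series controlled by integral comparison. Throughout I write $\eta_j=\eta_0 j^{-\alpha}$, so that $\eta_j^2 j^{-\beta}=\eta_0^2 j^{-(2\alpha+\beta)}$ and every factor of $\eta_0$ collects into the prefactor $\eta_0^{2-r}$ appearing in both $c_{\alpha,\beta,r}$ and $c'_{\alpha,\beta,r}$.

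For the first sum, where $1\le j\le[\tfrac k2]$, I would use the crude but clean lower bound
\begin{equation*}
  \sum_{\ell=j+1}^k \eta_\ell \ge \sum_{\ell=[\frac k2]+1}^k \eta_\ell \ge \frac{k}{2}\cdot\eta_0 k^{-\alpha} = \frac{\eta_0}{2}k^{1-\alpha},
\end{equation*}
valid because this range contains at least $k/2$ indices, each with $\ell^{-\alpha}\ge k^{-\alpha}$. Raising to the power $-r$ produces exactly the factor $2^r\eta_0^{-r}k^{-r(1-\alpha)}$, reducing the first sum to $2^r\eta_0^{2-r}k^{-r(1-\alpha)}\sum_{j=1}^{[k/2]}j^{-(2\alpha+\beta)}$. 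The remaining $p$-series splits into the three cases governed by $2\alpha+\beta$ versus $1$: for $2\alpha+\beta>1$ it converges with $1+\int_1^\infty x^{-(2\alpha+\beta)}\,dx=\frac{2\alpha+\beta}{2\alpha+\beta-1}$; for $2\alpha+\beta=1$ it is $O(\ln k)$; and for $2\alpha+\beta<1$ the integral comparison $\sum_{j=1}^{[k/2]}j^{-(2\alpha+\beta)}\le 1+\frac{[k/2]^{1-2\alpha-\beta}}{1-2\alpha-\beta}$ together with $[k/2]^{1-2\alpha-\beta}\le 2^{2\alpha+\beta-1}k^{1-2\alpha-\beta}$ yields the constant $\frac{2^{2\alpha+\beta-1}}{1-2\alpha-\beta}$ and the exponent $\max(0,1-2\alpha-\beta)$.

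For the second sum, where $[\tfrac k2]+1\le j\le k-1$, the denominator is instead bounded by keeping the distance to $k$: since each of the $k-j$ terms satisfies $\ell^{-\alpha}\ge k^{-\alpha}$, one has $\sum_{\ell=j+1}^k \eta_\ell \ge \eta_0(k-j)k^{-\alpha}$, hence $\big(\sum_{\ell=j+1}^k \eta_\ell\big)^{-r}\le \eta_0^{-r}(k-j)^{-r}k^{\alpha r}$. Bounding $j^{-(2\alpha+\beta)}\le 2^{2\alpha+\beta}k^{-(2\alpha+\beta)}$ on this range collects the prefactor $2^{2\alpha+\beta}\eta_0^{2-r}$ and the power $k^{-(2\alpha+\beta)+\alpha r}=k^{-((2-r)\alpha+\beta)}$, leaving $\sum_{j=[k/2]+1}^{k-1}(k-j)^{-r}=\sum_{i=1}^{k-[k/2]-1} i^{-r}$ after the substitution $i=k-j$. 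The analogous trichotomy in $r$ then gives $\frac{r}{r-1}$ when $r>1$, an $O(\ln k)$ factor when $r=1$, and $\frac{2^{r-1}}{1-r}k^{1-r}$ when $r<1$, matching $c'_{\alpha,\beta,r}$ and the exponent $\max(0,1-r)$.

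The routine but delicate part is matching the stated constants exactly rather than up to universal factors: this forces the specific lower bounds above (the ``$k/2$ terms of size $k^{-\alpha}$'' bound in the first sum, and the ``$(k-j)$ terms of size $k^{-\alpha}$'' bound in the second), and it requires absorbing the additive $+1$ from each integral comparison and the $\ln 2$ discrepancies into the constants for all sufficiently large $k$. I expect the main bookkeeping obstacle to be the boundary cases $2\alpha+\beta=1$ and $r=1$, where the stated convention ``$k^{-\max(0,0)}=\ln k$'' must be reconciled with $\sum_{j=1}^N j^{-1}\le 1+\ln N$, which is only $\le 2\ln N$ once $N$ is large; this is precisely where the constant $2$ in the corresponding branches of $c_{\alpha,\beta,r}$ and $c'_{\alpha,\beta,r}$ originates.
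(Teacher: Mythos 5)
Your proof is correct and recovers the stated constants exactly; note that the paper itself contains no proof of this lemma to compare against, since it is recalled verbatim from \cite{JinLu:2019}, and your two-step recipe (lower-bounding the tail sum by $\tfrac{\eta_0}{2}k^{1-\alpha}$ on the range $j\le[\tfrac k2]$ and by $\eta_0(k-j)k^{-\alpha}$ on the range $j>[\tfrac k2]$, then reducing to a $p$-series in $j$, respectively in $i=k-j$) is precisely the standard argument for estimates of this type.

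One refinement: your closing worry about absorbing the additive $+1$ ``for all sufficiently large $k$'' is unnecessary, and the constants in fact hold for every $k$. In the sub-critical branches ($2\alpha+\beta<1$, resp.\ $r<1$) use the comparison $\sum_{j=1}^N j^{-s}\le\int_0^N x^{-s}\,dx=\frac{N^{1-s}}{1-s}$, valid for $0\le s<1$ because $j^{-s}\le\int_{j-1}^j x^{-s}\,dx$ term by term; equivalently, evaluate your own bound exactly, $1+\int_1^N x^{-s}\,dx=\frac{N^{1-s}-s}{1-s}\le\frac{N^{1-s}}{1-s}$, so the $+1$ cancels rather than needing to be absorbed. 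In the convergent branches the evaluation $1+\int_1^\infty x^{-s}\,dx=\frac{s}{s-1}$ is already exact, and in the logarithmic branches the needed inequality $1+\ln(k/2)\le2\ln k$ holds whenever $\ln k\ge1-\ln 2$, i.e.\ for every $k\ge2$; for $k=1$ both sums are empty. With these observations your argument proves the lemma as stated, with the stated constants, and with no asymptotic caveat.
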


The next result gives an important recursion between the variance estimate.
\begin{lemma}\label{lem:recursion-var}
Let Assumption \ref{ass:sgd} be fulfilled. Then for the SGD iterate $x_k^\delta$, with
$\phi_j^s=\|B^{\frac{1}{2}+s}\Pi_{j+1}^k(B)\|$, there holds
\begin{align*}
  &\E[\|B^s(x_{k+1}^\delta-\E[x_{k+1}^\delta])\|^2]\\
  \leq  &\sum_{j=1}^k\eta_j^2(\phi_j^s)^2
  \left(c_s\E[\|B^s\left(x_j^\delta-\E[x_j^\delta]\right)\|^2]+2c_pj^{-2(1-\alpha)(p+\frac12)}+2\delta^2\right),
\end{align*}
with $s\in\{0,\frac12\}$ and $c_s,c_p$ given below.
\end{lemma}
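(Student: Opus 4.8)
The plan is to convert the SGD recursion \eqref{eqn:sgd} into a perturbed Landweber recursion for the fluctuation $v_k:=x_k^\delta-\E[x_k^\delta]$ driven by a conditionally mean-zero noise, and then exploit a martingale orthogonality to obtain the stated sum. Writing $M_i:=a_ia_i^t=A^te_ie_i^tA$ so that $\E[M_{i_k}\mid\mathcal{F}_{k-1}]=B$, the update \eqref{eqn:sgd} reads $x_{k+1}^\delta=(I-\eta_kM_{i_k})x_k^\delta+\eta_k y_{i_k}^\delta a_{i_k}$, while \eqref{eqn:sgd=lm} gives $\E[x_{k+1}^\delta]=(I-\eta_k B)\E[x_k^\delta]+\eta_k\bar A^t\bar y^\delta$. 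Subtracting and regrouping around the mean operator $B$ (adding and subtracting $\eta_k Bv_k$), I would arrive at $v_{k+1}=(I-\eta_k B)v_k+\eta_k\tilde\zeta_k$, where $\tilde\zeta_k:=(B-M_{i_k})v_k+(B-M_{i_k})\E[x_k^\delta]+(y_{i_k}^\delta a_{i_k}-\bar A^t\bar y^\delta)$ satisfies $\E[\tilde\zeta_k\mid\mathcal{F}_{k-1}]=0$. A short computation shows $\tilde\zeta_k=(B-M_{i_k})v_k-(r_{i_k}^\delta a_{i_k}-\E[r_{i_k}^\delta a_{i_k}\mid\mathcal{F}_{k-1}])$, where $r^\delta:=A\E[x_k^\delta]-y^\delta$ is the deterministic Landweber residual.

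Since $v_1=0$, unrolling the recursion gives $v_{k+1}=\sum_{j=1}^k\eta_j\Pi_{j+1}^k(B)\tilde\zeta_j$ with $\Pi_{j+1}^k(B)=\prod_{i=j+1}^k(I-\eta_i B)$. Applying $B^s$ and expanding the square, the cross terms vanish: for $j<\ell$ the factor carrying $\tilde\zeta_j$ is $\mathcal{F}_{\ell-1}$-measurable while $\E[\tilde\zeta_\ell\mid\mathcal{F}_{\ell-1}]=0$, so conditioning on $\mathcal{F}_{\ell-1}$ kills it. This leaves $\E[\|B^sv_{k+1}\|^2]=\sum_{j=1}^k\eta_j^2\,\E[\|B^s\Pi_{j+1}^k(B)\tilde\zeta_j\|^2]$, which already exhibits the outer structure of the claim with the deterministic filter $\Pi_{j+1}^k(B)$.

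It then remains to bound each summand. I would split $\tilde\zeta_j$ into its operator-fluctuation part $(B-M_{i_j})v_j$ and its residual-noise part and use $\|a+b\|^2\le 2\|a\|^2+2\|b\|^2$; dropping the subtracted conditional means (a variance is at most a second moment) and using that both parts are rank-one multiples of $a_{i_j}$, each contribution factors through $\frac1n\sum_i\|B^s\Pi_{j+1}^k(B)a_i\|^2(\cdot)_i$. The crucial point, and the source of the half-power in $\phi_j^s=\|B^{1/2+s}\Pi_{j+1}^k(B)\|$, is that $a_i=\sqrt n\,\bar A^te_i$ together with the polar factorization $\bar A^t=B^{1/2}W$, $\|W\|\le 1$, gives $\|B^s\Pi_{j+1}^k(B)a_i\|\le\sqrt n\,\phi_j^s$. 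Combined with $\sum_i(a_i,v_j)^2=\|Av_j\|^2=n\|B^{1/2}v_j\|^2$ (which is $\|B^sv_j\|^2$ for $s=\tfrac12$ and is $\le\|B\|\,\|B^sv_j\|^2$ for $s=0$) this yields the term $c_s\E[\|B^sv_j\|^2]$, while $\sum_i(r_i^\delta)^2=\|A\E[x_j^\delta]-y^\delta\|^2$ yields the data/approximation terms. Finally I would control the residual by splitting $A\E[x_j^\delta]-y^\delta$ into $A\E[x_j]-y^\dagger$ and $A\E[x_j^\delta-x_j]-(y^\delta-y^\dagger)$, bounding the former via Lemma \ref{thm:JinLu} (with $s=\tfrac12$) by $c_p j^{-(1-\alpha)(p+\frac12)}$ and the latter via the Landweber filter estimate \eqref{eqn:dp-bdd} by $\delta$, which produces the factor $2c_p j^{-2(1-\alpha)(p+\frac12)}+2\delta^2$.

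The main obstacle I anticipate is the bookkeeping of the decomposition in the first step: one must regroup the SGD update so that the propagation uses the \emph{deterministic} operator $B$ (so that $\Pi_{j+1}^k(B)$ and hence $\phi_j^s$ appear) while collecting all remaining terms into a single conditionally mean-zero noise, and then verify the half-power extraction $\|B^s\Pi_{j+1}^k(B)a_i\|\le\sqrt n\,\phi_j^s$ together with the $s$-dependent passage from $\|B^{1/2}v_j\|$ to $\|B^sv_j\|$, which is exact for $s=\tfrac12$ but costs a factor $\|B\|$ for $s=0$; all such factors, along with the $2$'s from $\|a+b\|^2\le2\|a\|^2+2\|b\|^2$, are absorbed into the constants $c_s$ and $c_p$.
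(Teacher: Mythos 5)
Your proposal is correct, and it is worth noting precisely where it overlaps with and departs from the paper. The paper's own proof is very short: it \emph{cites} \cite[Theorem 3.3]{JinLu:2019} as a black box for exactly the inequality
\begin{equation*}
\E[\|B^s(x_{k+1}^\delta-\E[x_{k+1}^\delta])\|^2]\le \sum_{j=1}^k\eta_j^2(\phi_j^s)^2\,\E[\|Ax_j^\delta-y^\delta\|^2],
\end{equation*}
and then finishes with the bias--variance identity and the triangle inequality, Lemma \ref{thm:JinLu} and \eqref{eqn:dp-bdd}. Your first two paragraphs (fluctuation recursion $v_{k+1}=(I-\eta_kB)v_k+\eta_k\tilde\zeta_k$ with conditionally centered noise, unrolling, martingale orthogonality of the cross terms, and the extraction $\|B^s\Pi_{j+1}^k(B)a_i\|\le\sqrt{n}\,\phi_j^s$) are in effect a self-contained reproof of that cited theorem, and they are correct; your closing steps (Lemma \ref{thm:JinLu} plus the filter bound \eqref{eqn:dp-bdd} for the deterministic residual) coincide with the paper's. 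What each approach buys: the paper's route is shorter and gives slightly sharper constants, because it keeps the centered stochastic gradient whole, bounds its conditional variance by the conditional second moment of the \emph{uncentered} gradient (producing $\|Ax_j^\delta-y^\delta\|^2$ directly), and then splits via the exact Pythagorean decomposition $\E[\|Ax_j^\delta-y^\delta\|^2]=\E[\|A(x_j^\delta-\E[x_j^\delta])\|^2]+\|A\E[x_j^\delta]-y^\delta\|^2$, so the coefficient of the variance term is $c_s$ with no factor $2$. Your splitting of $\tilde\zeta_j$ into the operator-fluctuation part and the residual part via $\|a+b\|^2\le2\|a\|^2+2\|b\|^2$ yields $2c_s$, $4c_p$, $4\delta^2$ instead; since the lemma's constants are only ``given below'' and enter Proposition \ref{prop:sgd-residual} generically, this is harmless, but you could recover the paper's constants by deferring the split as just described. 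Two small bookkeeping points you gloss over, both absorbed into $c_p$ in the paper: the $j=1$ term, where Lemma \ref{thm:JinLu} does not apply and one uses $\|A\E[x_1]-y^\dagger\|=\|y^\dagger\|$ (since $x_1=0$), and the shift from $(j-1)^{-(p+\frac12)(1-\alpha)}$ to $j^{-(p+\frac12)(1-\alpha)}$, which costs a factor $2^{(p+\frac12)(1-\alpha)}$.
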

\begin{proof}
By \cite[Theorem 3.3]{JinLu:2019} and the bias variance decomposition, the left hand side (LHS) is bounded by
\begin{align*}
  {\rm LHS} 
   \le & \sum_{j=1}^k\eta_j^2(\phi_j^s)^2 \E[\| Ax_j^\delta-y^\delta\|^2\\
    =& \sum_{j=1}^k\eta_j^2(\phi_j^s)^2 \left(\E[\| A\left(x_j^\delta-\E[x_j^\delta]\right)\|^2]+\|A\E[x_j^\delta]-y^\delta\|^2\right).
\end{align*}
Now by the triangle inequality  and \eqref{eqn:dp-bdd},
\begin{align*}
   {\rm LHS}\le &\sum_{j=1}^k\eta_j^2(\phi_j^s)^2 \left(\E[\| A\left(x_j^\delta-\E[x_j^\delta]\right)\|^2]+\left(\|A\E[x_j]-y^\dagger\| + \|A\left(\E[x_j^\delta]-\E[x_j]\right)-\left(y^\delta-y^\dagger\right)\|\right)^2\right)
   \end{align*}
Since $\|A \E [x_1]-y^\dagger\| = \| y^\dagger\|$, and
\begin{equation*}
\|A \E[x_j] - y^\dagger \| \le \sqrt{n}c_{p,\frac12} (j-1)^{-(p+\frac12)(1-\alpha)} \le \sqrt{n}c_{p,\frac12} 2^{(p+\frac12)(1-\alpha)}j^{-(p+\frac12)(1-\alpha)}
   \end{equation*}
for $j\ge 2$ by Lemma \ref{thm:JinLu}. Thus, with $c_p:= \left(\max\{\|y^\dagger \|, \sqrt{n}c_{p,\frac12} 2^{(p+\frac12)(1-\alpha)}\}\right)^2$,
\begin{align*}
{\rm LHS} \le&\sum_{j=1}^k\eta_j^2(\phi_j^s)^2
  \left(n^{2s}\| A \|^{4(\frac12-s)}\E[\|B^s\left(x_j^\delta-\E[x_j^\delta]\right)\|^2]+2c_pj^{-2(1-\alpha)(p+\frac12)}+2\delta^2\right)
\end{align*}
which completes the proof of the lemma with $c_s=n^{2s}\|A\|^{4(\frac12-s)}$.
\end{proof}

The next result gives a sharp estimate on $\E[\|B^s(x_k^\delta-\E[x_k^\delta])\|^2]$.
\begin{proposition}\label{prop:sgd-residual}
Let Assumption \ref{ass:sgd} be fulfilled. Then for the SGD iterate $x_k^\delta$, the mean squared error
$\E[\|B^s(x_k^\delta-\E[x_k^\delta])\|^2]$ with $s\in\{0,\frac12\}$ satisfies
\begin{equation*}
 \E[\|B^s(x_k^\delta-\E[x_k^\delta])\|^2] \leq c(\alpha,p,n,s,\beta,\gamma)(k^{-\beta} + \delta^2k^{-\gamma})
\end{equation*}

\noindent for $\beta<\min\left((1+2s)(1-\alpha),(1+2p)(1-\alpha)+\alpha\right)$ and $\gamma<\min(\alpha,1-\alpha)$.

\end{proposition}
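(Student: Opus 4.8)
The plan is to run a strong induction on $k$ driven by the variance recursion of Lemma \ref{lem:recursion-var}, with the two-term ansatz
\[
  e_k^s := \E[\|B^s(x_k^\delta-\E[x_k^\delta])\|^2]\le c\,(k^{-\beta}+\delta^2 k^{-\gamma}),
\]
the constant $c=c(\alpha,p,n,s,\beta,\gamma)$ to be fixed at the end, and to close it using the summation estimates of Lemma \ref{lem:basicest1}. First I would simplify the filter factor in the recursion: since $\phi_j^s=\|B^{\frac12+s}\prod_{i=j+1}^k(I-\eta_iB)\|$ and $\eta_j=c_0j^{-\alpha}\le c_0\le\|B\|^{-1}$ (the last inequality because $\|B\|=n^{-1}\|A\|^2\le\max_i\|a_i\|^2$), Lemma \ref{lem:estimate-B} with $S=B$ and exponent $\tfrac12+s$ yields $(\phi_j^s)^2\le C_s(\sum_{i=j+1}^k\eta_i)^{-(1+2s)}$. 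Substituting this turns Lemma \ref{lem:recursion-var} into
\[
  e_{k+1}^s\le C_s\sum_{j=1}^k\frac{\eta_j^2}{(\sum_{i=j+1}^k\eta_i)^{1+2s}}\Big(c_s e_j^s+2c_p\,j^{-(1-\alpha)(2p+1)}+2\delta^2\Big),
\]
which is exactly the shape that Lemma \ref{lem:basicest1} (with $r=1+2s$) bounds once the $j$-range is split at $[k/2]$.

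Next I would insert the induction hypothesis into the bracket and treat the three contributions separately. The deterministic source term $2c_p j^{-(1-\alpha)(2p+1)}$, via Lemma \ref{lem:basicest1}, produces $k^{-(1+2s)(1-\alpha)}$ from the lower range (the other exponent $1-2\alpha-(1-\alpha)(2p+1)=-\alpha-2p(1-\alpha)$ is negative, so no $\max$ contribution) and $k^{-((1+2p)(1-\alpha)+\alpha)}$, up to a logarithm, from the upper range; requiring both to be $\le k^{-\beta}$ forces precisely $\beta<\min((1+2s)(1-\alpha),(1+2p)(1-\alpha)+\alpha)$. The noise constant $2\delta^2$, via Lemma \ref{lem:basicest1} with exponent $0$, yields $\delta^2 k^{-\min(\alpha,1-\alpha)}$ up to a logarithm, which is $\le\delta^2 k^{-\gamma}$ exactly under $\gamma<\min(\alpha,1-\alpha)$. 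Thus the two admissible windows in the statement are pinned down by these inhomogeneous terms, and the strict inequalities leave room to swallow the logarithmic factors that arise from $r=1$ (the case $s=0$).

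The delicate part, and the main obstacle, is the self-referencing variance term $c_s c\,(j^{-\beta}+\delta^2 j^{-\gamma})$, which must be controlled so that the induction closes with a finite, $k$-independent $c$. Applying Lemma \ref{lem:basicest1} to $\sum_j\frac{\eta_j^2}{(\sum_{i>j}\eta_i)^{1+2s}}j^{-\beta}$ (and the analogous $\gamma$-sum), the lower range always contributes a strictly subcritical power (smaller than $k^{-\beta}$, using $\alpha>0$ and $\beta<(1+2s)(1-\alpha)$), which supplies the room to absorb it into $\tfrac12 c\,k^{-\beta}$ for $k$ beyond some $k_0$. For $s=0$ (kernel exponent $r=1$) the upper range is likewise strictly subcritical, so a plain contraction closes the induction, and the finitely many indices $k<k_0$ are absorbed into the base case by enlarging $c$. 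The genuine difficulty is the endpoint $s=\tfrac12$ (i.e. $r=2$), where the near-diagonal part of the kernel ($j$ close to $k$) does not decay and the upper summation range contributes at the same order $k^{-\beta}$; a plain contraction therefore fails, and closing the induction requires exploiting the precise constant from Lemma \ref{lem:basicest1} together with a discrete Gr\"onwall-type bookkeeping (or a sharper, cancellation-respecting form of the variance recursion) to keep the accumulated constant finite and independent of $k$. Once this is arranged for both $s\in\{0,\tfrac12\}$, matching the $j^{-\beta}$ and $\delta^2 j^{-\gamma}$ pieces of the ansatz against the source and noise contributions respectively completes the induction, and hence the proof.
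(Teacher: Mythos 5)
Your setup coincides with the paper's: the same two-term induction ansatz driven by the recursion of Lemma \ref{lem:recursion-var}, with Lemma \ref{lem:estimate-B} converting $(\phi_j^s)^2$ into a kernel and Lemma \ref{lem:basicest1} bounding the resulting sums after splitting at $[\frac k2]$; your treatment of $s=0$, and the way the admissible windows for $\beta$ and $\gamma$ arise from the source and noise terms, is exactly the paper's. But there is a genuine gap at $s=\tfrac12$, and the remedies you sketch cannot close it. If you use the full kernel power $r=1+2s=2$ on the upper range $[\frac k2]<j\le k-1$, that part of the kernel does not decay at all: the $j=k-1$ term alone is $\eta_{k-1}^2/\eta_k^2\approx 1$. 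Correspondingly, Lemma \ref{lem:basicest1} with $r=2$ gives the exponent $-((2-r)\alpha+\beta)+\max(0,1-r)=-\beta$, so the self-referencing term contributes $c_sc\,c'_{\alpha,\beta,2}\,k^{-\beta}$ with $c_sc'_{\alpha,\beta,2}\ge 2n$, and the noise term contributes $\delta^2\cdot O(1)$ rather than $\delta^2k^{-\gamma}$ (so your claim that the noise term yields $\delta^2k^{-\min(\alpha,1-\alpha)}$ is already false for $s=\tfrac12$ under your uniform choice of $r$). No contraction with factor below $1$ is possible, iterating the recursion (discrete Gr\"onwall) produces a geometric series with ratio at least $2n$, and tracking precise constants cannot help, because the obstruction is a lower bound on the kernel, not slack in the estimates. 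Note also that the upper-range exponent $(1+2p)(1-\alpha)+\alpha$ you report for the source term is what $r=1$ produces, not $r=2$, so your own numerology is inconsistent with the stated kernel choice.

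The missing idea, which is how the paper closes the induction, is to use \emph{different} kernel powers on the two ranges: keep the full power $r=1+2s$ only on the lower range $j\le[\frac k2]$ (this is where $\beta<(1+2s)(1-\alpha)$ enters), while on the upper range one bounds
\begin{equation*}
(\phi_j^s)^2=\bigl\|B^{\frac12+s}\Pi_{j+1}^k(B)\bigr\|^2\le \|B\|^{2s}\,\bigl\|B^{\frac12}\Pi_{j+1}^k(B)\bigr\|^2\le \frac{\|B\|^{2s}}{2e\sum_{i=j+1}^k\eta_i},
\end{equation*}
i.e.\ spends the extra $B^{2s}$ as an operator-norm constant so as to keep only kernel power $r=1$ near the diagonal. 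The second estimate of Lemma \ref{lem:basicest1} then gives upper-range contributions of order $k^{-(\alpha+\beta')}\ln k$ for the $j^{-\beta'}$ sum and $k^{-\alpha}\ln k$ for the noise sum, both strictly subcritical because $\alpha>0$; the diagonal term $j=k$ is handled separately and is of order $k^{-(2\alpha+\beta')}$. With this mixed treatment every prefactor of $c\,k^{-\beta}$ and $c\,\delta^2k^{-\gamma}$ can be made smaller than $\tfrac14$ for all $k$ beyond a fixed $k^*$, and the plain contraction you already use for $s=0$ closes the induction for $s=\tfrac12$ as well; it is also precisely this device that produces the constraint $\beta<(1+2p)(1-\alpha)+\alpha$ appearing in the statement.
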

\begin{proof}
 Lemma \ref{lem:recursion-var} implies that
the weighted mean squares error $d_j^s = \E[\|B^s(x_k^\delta-x^\dag)\|^2]$ satisfies the following recursion
\begin{equation}\label{eqn:variance-recur}
d_{k+1}^s\leq  \sum_{j=1}^k\eta_j^2(\phi_j^s)^2
  \left(c_sd_j^s+2c_pj^{-2(1-\alpha)(p+s)}+2\delta^2\right)
\end{equation}
Now we prove the desired assertion by mathematical induction (with $\beta=(2p+1)(1-\alpha)$):
\begin{equation*}
  d_k^s \leq c(k^{-\beta} + \delta^2k^{-\gamma}),
\end{equation*}
where the constant $c\geq 1$ is to be determined. This assertion holds trivially for all finite $k$, up to $k^*$,
provided that $c$ is sufficiently large. Now suppose the assertion holds for $k\geq k^*$, and we
prove the assertion for $k+1$. Indeed, it follows from the recursion \eqref{eqn:variance-recur}, the induction hypothesis and since
$\beta<2(1-\alpha)(p+\frac12)$, that
\begin{align*}
  d_{k+1}^s & \leq \sum_{j=1}^k\eta_j^2(\phi_j^s)^2(c_sc(j^{-\beta} + j^{-\gamma}\delta^2) +2c_p j^{2(1-\alpha)(p+s)}+2\delta^2)\\
          & \leq c_sc\sum_{j=1}^k\eta_j^2(\phi_j^s)^2j^{-\beta} +(c_sc+2)\delta^2\sum_{j=1}^k\eta_j^2(\phi_j^s)^2 +2c_p \sum_{j=1}^k\eta_j^2(\phi_j^s)^2 j^{-2(1-\alpha)(p+\frac12)}\\
          &\le (c_sc+2c_p)\sum_{j=1}^k\eta_j^2(\phi_j^s)^2j^{-\beta'} +(c_sc+2)\delta^2\sum_{j=1}^k\eta_j^2(\phi_j^s)^2.
\end{align*}
\noindent with $\beta'=\min(\beta,(1+2p)(1-\alpha))$. Without loss of generality, we may assume that
$\beta'\ge 1-2\alpha$. By Lemmas \ref{lem:estimate-B} and \ref{lem:basicest1}, the first sum is bounded by
\begin{align}
    \sum_{j=1}^k \eta_j^2(\phi_j^s)^2 j^{-\beta'} \leq& e^{-2}c_{\alpha,\beta',1+2s}k^{-(1+2s)(1-\alpha)+\max(0,1-2\alpha-\beta')}\nonumber\\
      &\quad + e^{-1}c'_{\alpha,\beta',1}\|B\|k^{-(\alpha+\beta')}\ln k
       + c_0^2\|B\|^2k^{-(2\alpha+\beta')}.\label{eqn:est-recur-1}
\end{align}
Since $\beta'+\alpha>\beta$ and $ \max(0,1-2\alpha-\beta')=0$, thus,
\begin{align*}
    \sum_{j=1}^k \eta_j^2\phi_j^2 j^{-\beta'} \leq&
     (e^{-2}c_{\alpha,\beta',1+2s} k^{-(1+2s)(1-\alpha)+\beta}\ln k + e^{-1}c'_{\alpha,\beta',1}\|B\|k^{-(\alpha+\beta')+\beta }\ln k
       + c_0^2\|B\|^2k^{-\alpha})k^{-\beta}.
\end{align*}
Meanwhile, with $-(1+2s)(1-\alpha)+\max(0,1-2\alpha)=-\min((1+2s)(1-\alpha),\alpha+2s(1-\alpha))$, we obtain
\begin{align*}
  \sum_{j=1}^k\eta_j^2(\phi_j)^2 &\leq  e^{-2}c_{\alpha,0,2} k^{-\min((1+2s)(1-\alpha),\alpha+2s(1-\alpha))} + e^{-1}c'_{\alpha,0,1}\|B\|k^{-\alpha }\ln k
       + c_0^2\|B\|^2k^{-2\alpha}\\
         & \leq (e^{-2}c_{\alpha,0,1+2s}k^{-\min((1-\alpha),\alpha)+\gamma} + e^{-1}c'_{\alpha,0,1}\|B\|k^{-\alpha+\gamma}\ln k  + c_0^2\|B\|^2k^{-2\alpha+\gamma})k^{-\gamma}
\end{align*}
Combining the preceding estimates yields
\begin{align*}
  d_{k+1} &\leq (cc_s+2c_p)\left(e^{-2}c_{\alpha,\beta',1+2s} k^{-(1+2s)(1-\alpha)+\beta}\ln k + e^{-1}c'_{\alpha,\beta',1}\|B\|k^{-(\alpha+\beta')+\beta }\ln k
       + c_0^2\|B\|^2k^{-\alpha}\right)k^{-\beta}\\
   &\quad + (c_sc+2) \delta^2\left(e^{-2}c_{\alpha,0,1+2s}k^{-\min((1-\alpha),\alpha)+\gamma} + e^{-1}c'_{\alpha,0,1}\|B\|k^{-\alpha+\gamma}\ln k  + c_0^2\|B\|^2k^{-2\alpha+\gamma}\right)k^{-\gamma}.
\end{align*}
Since by assumption, $\beta<(1+2s)(1-\alpha)$, $\beta<\alpha+\beta'$ and $\gamma<\min(\alpha,1-\alpha)$, there exists $k^*$ such that for all $k\geq k^*$
\begin{align*}
  (c_s+2c_p)\left(e^{-2}c_{\alpha,\beta',1+2s} k^{-(1+2s)(1-\alpha)+\beta}\ln k + e^{-1}c'_{\alpha,\beta',1}\|B\|k^{-(\alpha+\beta')+\beta }\ln k
       + c_0^2\|B\|^2k^{-2\alpha}\right)<\tfrac14,\\
 (c_s+2) \delta^2\left(e^{-2}c_{\alpha,0,1+2s}k^{-\min((1-\alpha),\alpha)+\gamma} + e^{-1}c'_{\alpha,0,1}\|B\|k^{-\alpha+\gamma}\ln k  + c_0^2\|B\|^2k^{-2\alpha+\gamma}\right) <\tfrac14.
\end{align*}
Thus, with this choice of $k^*$ and $k\ge k^*$,
\begin{align*}
d_{k+1} &\le \tfrac{c}{4}\left( k^{-\beta}+\delta^2k^{-\gamma}\right) \le c \tfrac{(1+k^{-1})^{\beta}}{4}\left( (k+1)^{-\beta}+\delta^2 (k+1)^{-\gamma}\right)<c\left((k+1)^{-\beta}+\delta^2 (k+1)^{-\gamma}\right)
\end{align*}
and we obtain the desired assertion.
\end{proof}

\begin{remark}
The $n$ factor in the estimate is due to the variance inflation of using stochastic
gradients in place of gradient in SGD. This factor can be reduced by suitable variance
reduction techniques, e.g., mini-batching and stochastic variance reduced gradient
\cite{JohnsonZhang:2013}. Note that with \cite[Theorems 3.1 and 3.2]{JinLu:2019} and $s=0$, Proposition
\ref{prop:sgd-residual} gives an improved (regarding the exponents) a priori bound for
the mean squared error $\E[\|x_k^\delta-x^\dagger\|^2]$.
\end{remark}

Last, using Lemma \ref{lem:recursion-var} and Proposition \ref{prop:sgd-residual}, we can prove Proposition
\ref{prop:sgd-variance}.
\begin{proof}[Proof of Proposition \ref{prop:sgd-variance}]
Using Lemma \ref{lem:recursion-var} and Proposition \ref{prop:sgd-residual} with $s=\frac12$ and $c=c(\alpha,p,n,s,\beta,\gamma)$, we deduce
\begin{align*}
  \E[\|A(x_{\kappa(\delta)}^\delta-\E[x_{\kappa(\delta)}^\delta])\|^2] & \le nc\left( \kappa(\delta)^{-\beta}+\delta^2 \kappa(\delta)^{-\gamma}\right).
\end{align*}
We choose $\gamma>0$. If $p<\frac12$ and $r>2p$, then we can choose $\beta>(1-\alpha)(2p+1)$, so with the choice $\kappa(\delta)=\delta^{-\frac{2}{(1-\alpha)(2p+1)}}$,
the claim follows. Otherwise, if $p\ge\frac12$, then we can choose $\beta>(1-\alpha)(r+1)$, so with the choice $\kappa(\delta)=\delta^{-\frac{2}{(1-\alpha)(r+1)}}$ the claim again follows. This completes the proof of the proposition.
\end{proof}

\section{Numerical experiments and discussions}\label{sec:numer}

Now we provide numerical experiments to complement the theoretical analysis. Three model examples, i.e., \texttt{phillips} (mildly
ill-posed, smooth), \texttt{gravity} (severely ill-posed, medium smooth)
and \texttt{shaw} (severely ill-posed, nonsmooth), are taken from the open source \texttt{MATLAB} package Regutools \cite{Hansen:2007}, available at
\url{http://people.compute.dtu.dk/pcha/Regutools/} (last accessed on April 14, 2020). The problems cover a variety of
setting, e.g., different solution smoothness and degree of ill-posedness. These examples are discretizations
of Fredholm/Volterra integral equations of the first kind, by means of either the Galerkin approximation with piecewise constant basis functions or quadrature rules.
All the examples are discretized into a linear system of size $n=m=1000$. In addition, we generate a synthetic example, termed
\texttt{smoothed-phillips}, whose exact solution $x^\dag$ is first generated by $\bar x^\dag = A^tAA^t \bar y^\dag$
and then normalized to have unit maximum, i.e., $x^\dag=\bar x^\dag/\|\bar x^\dag\|_{\ell^\infty}$, where A is the
system matrix and $\bar y^\dag$ the exact data from \texttt{phillips}, and the corresponding exact data is formed
by $ y^\dag = Ax^\dag $. By its very construction, the solution $x^\dag$ satisfies Assumption \ref{ass:sgd}(ii) with an exponent
$p>2$, and thus it is very smooth in some sense. Throughout, the noisy data $y^\delta$ is generated according to
\begin{equation*}
  y_i^\delta: = y_i^\dagger + \delta \max_{j}(| y^\dagger_j|)\xi_i,\quad i=1,\ldots,n,
\end{equation*}
where the i.i.d. random variables $\xi_i$ follow the standard Gaussian distribution (with zero mean and unit variance), and $\delta>0$ denotes
the relative noise level (by slightly abusing the notation). The parameter $c_0$ in the stepsize schedule in Assumption \ref{ass:sgd}(i)
is set to $(\max_i\|a_i\|^2)^{-1}$, the exponent $\alpha$ is taken from the set $\{0.1,0.3,0.5\}$, and unless otherwise stated, the stopping
criterion is tested every 100 SGD iterations (see Remarks \ref{rm:thm1} and \ref{rm:thm2}). SGD is always initialized with $x_1=0$,
and the maximum number of epochs is fixed at $5000$, where one epoch refers to $n$ SGD iterations. The parameter $\tau$ in the
discrepancy principle \eqref{eqn:dp} is fixed at $\tau=1.2$. All the statistical quantities presented below are computed from 100
independent runs.

\subsection{Optimality}
{First, we verify the optimality of the discrepancy principle \eqref{eqn:dp}, against an order
optimal regularization method. There are many possible choices, e.g., Landweber method and conjugate
gradient method \cite[Chapters 6 and 7]{EnglHankeNeubauer:1996}. In this work, we employ the Landweber method
as the benchmark. The Landweber method generally converges steadily although
often slowly.} However, it is known to be an order optimal regularization method with
infinite qualification \cite[Theorem 6.5, p. 159]{EnglHankeNeubauer:1996}, when terminated by the discrepancy
principle \eqref{eqn:dpex}, {and further, it is the population version of SGD}
(the expected iterates $\left(\E[x_k^\delta]\right)_{k
\in\mathbb{N}}$ are exactly the Landweber iterates; see \eqref{eqn:sgd=lm}), and thus it serves a
good benchmark for performance comparison in terms of the convergence rate. For the comparison, the Landweber
method is initialized with $x_1=0$, with a constant stepsize $1/\|A\|^2$, and it is terminated with the discrepancy principle
\eqref{eqn:dpex} with $\tau^*=1.2$ (i.e., the same as for SGD) with the maximum number of iterations being fixed at 5000.
The numerical results for the examples are summarized in Tables \ref{tab:lm-phillips}--\ref{tab:lm-phil-smooth}. In the tables,
$e_{\rm sgd}$ and $\rm{std}(e_{\rm sgd})$ denote the {(sample)} mean and the {(sample)} standard deviation of the (squared) error
$\|x_{k_\delta}^\delta-x^\dag\|^2$, respectively, i.e.,
\begin{equation*}
 e_{\rm sgd}=\E[\|x_{k_\delta}^\delta-x^\dag\|^2]\quad \mbox{and}\quad  {\rm std}(e_{\rm sgd}) =
 \E[(\|x_{k^\delta}^\delta-x^\dag\|^2-e_{\rm sgd})^2]^\frac{1}{2},
\end{equation*}
and $k_{\rm sgd}=\E[k_{\delta}]$ is the mean stopping index for SGD, in terms of the number of epochs. Likewise $e_{\rm lm}$ and $k_{\rm lm}$
denote the squared reconstruction error and stopping index, respectively, of the Landweber method, terminated according to
the discrepancy principle \eqref{eqn:dpex}.

\begin{table}[hbt!]
\centering
\caption{Comparison between SGD and LM for \texttt{phillips}.\label{tab:lm-phillips}}
\setlength{\tabcolsep}{4pt}
\begin{tabular}{c|ccccccccc|cc|}
\toprule
\multicolumn{1}{c}{}&\multicolumn{3}{c}{$\alpha=0.1$} & \multicolumn{3}{c}{$\alpha=0.3$} & \multicolumn{3}{c}{$\alpha=0.5$} & \multicolumn{2}{c}{LM} \\
\cmidrule(l){2-4} \cmidrule(l){5-7} \cmidrule(l){8-10} \cmidrule(l){11-12}
$\delta$ & $e_{\rm sgd}$ & $\mathrm{std}(e_{\rm sgd})$ & $k_{\rm sgd}$ & $e_{\rm sgd}$ & $\mathrm{std}(e_{\rm sgd})$ & $k_{\rm sgd}$ & $e_{\rm sgd}$ & $\mathrm{std}(e_{\rm sgd})$ & $k_{\rm sgd}$ & $e_{\rm lm}$ & $k_{\rm lm}$\\
1e-3 & 8.60e-3 &  4.53e-3 &  1.424 &  8.53e-3 &  4.42e-3 &  4.189 &   8.34e-3 &  4.60e-3 &  52.29 &  5.72e-3 &  361 \\
5e-3 & 1.70e-2 &  8.41e-3 &  0.458 &  2.31e-2 &  8.81e-3 &  0.975 &   2.48e-2 &  7.38e-3 &  6.032 &  2.26e-2 &  128 \\
1e-2 & 2.82e-2 &  1.62e-2 &  0.281 &  4.72e-2 &  2.07e-2 &  0.433 &   5.78e-2 &  2.04e-2 &  1.647 &  5.76e-2 &  51 \\
5e-2 & 1.41e-1 &  9.70e-2 &  0.157 &  1.49e-1 &  9.01e-2 &  0.116 &   2.11e-1 &  9.69e-2 &  0.173 &  2.19e-1 &  15 \\
\bottomrule
\end{tabular}
\end{table}

\begin{table}[hbt!]
\centering
\caption{Comparison between SGD and LM for \texttt{gravity}.\label{tab:lm-gravity}}
\setlength{\tabcolsep}{4pt}
\begin{tabular}{c|ccccccccc|cc|}
\toprule
\multicolumn{1}{c}{}&\multicolumn{3}{c}{$\alpha=0.1$} & \multicolumn{3}{c}{$\alpha=0.3$} & \multicolumn{3}{c}{$\alpha=0.5$} & \multicolumn{2}{c}{LM} \\
\cmidrule(l){2-4} \cmidrule(l){5-7} \cmidrule(l){8-10} \cmidrule(l){11-12}
$\delta$ & $e_{\rm sgd}$ & $\mathrm{std}(e_{\rm sgd})$ & $k_{\rm sgd}$ & $e_{\rm sgd}$ & $\mathrm{std}(e_{\rm sgd})$ & $k_{\rm sgd}$ & $e_{\rm sgd}$ & $\mathrm{std}(e_{\rm sgd})$ & $k_{\rm sgd}$ & $e_{\rm lm}$ & $k_{\rm lm}$\\
1e-3 & 6.71e-1 &  2.61e-1 &  1.960  &  7.46e-1 &  2.73e-1 &  9.316 &  7.78e-1 &  2.49e-1 &  198.5 &  7.25e-1 &  640 \\
5e-3 & 2.00e0  &  8.91e-1 &  0.451  &  2.53e0  &  1.12e0  &  0.880 &  2.76e0  &  1.14e0  &  6.217 &  2.44e0  &  95  \\
1e-2 & 3.12e0  &  1.57e0  &  0.250  &  4.33e0  &  1.92e0  &  0.361 &  4.74e0  &  2.07e0  &  1.366 &  4.02e0  &  50 \\
5e-2 & 9.07e0  &  5.31e0  &  0.143  &  1.15e1  &  6.61e0  &  0.107 &  1.52e1  &  7.46e0  &  0.135 &  1.66e1  &  9  \\
\bottomrule
\end{tabular}
\end{table}

\begin{table}[hbt!]
\centering
\caption{Comparison between SGD and LM for \texttt{shaw}.\label{tab:lm-shaw}}
\setlength{\tabcolsep}{4pt}
\begin{tabular}{c|ccccccccc|cc|}
\toprule
\multicolumn{1}{c}{}&\multicolumn{3}{c}{$\alpha=0.1$} & \multicolumn{3}{c}{$\alpha=0.3$} & \multicolumn{3}{c}{$\alpha=0.5$} & \multicolumn{2}{c}{LM} \\
\cmidrule(l){2-4} \cmidrule(l){5-7} \cmidrule(l){8-10} \cmidrule(l){11-12}
$\delta$ & $e_{\rm sgd}$ & $\mathrm{std}(e_{\rm sgd})$ & $k_{\rm sgd}$ & $e_{\rm sgd}$ & $\mathrm{std}(e_{\rm sgd})$ & $k_{\rm sgd}$ & $e_{\rm sgd}$ & $\mathrm{std}(e_{\rm sgd})$ & $k_{\rm sgd}$ & $e_{\rm lm}$ & $k_{\rm lm}$\\
1e-3 & 8.29e0 &  9.35e-2 & 57.73 &  8.47e0 &  5.59e-2 & 891.3 &  2.01e1 &  5.64e-1 & 5000  &  1.28e1 &  5000\\
5e-3 & 2.77e1 &  1.24e0  & 0.948 &  2.80e1 &  1.16e0  & 3.811 &  2.82e1 &  1.02e0  & 51.69 &  2.81e1 &  189 \\
1e-2 & 2.96e1 &  1.65e0  & 0.597 &  3.10e1 &  1.14e0  & 1.938 &  3.12e1 &  1.08e0  & 19.71 &  3.11e1 &  117 \\
5e-2 & 5.02e1 &  1.08e1  & 0.155 &  6.07e1 &  8.08e0  & 0.250 &  6.70e1 &  7.41e0  & 0.818 &  6.85e1 &  22  \\
\bottomrule
\end{tabular}
\end{table}

\begin{table}[hbt!]
\centering
\caption{Comparison between SGD and LM for \texttt{smoothed-phillips}.\label{tab:lm-phil-smooth}}
\setlength{\tabcolsep}{4pt}
\begin{tabular}{c|ccccccccc|cc|}
\toprule
\multicolumn{1}{c}{}&\multicolumn{3}{c}{$\alpha=0.1$} & \multicolumn{3}{c}{$\alpha=0.3$} & \multicolumn{3}{c}{$\alpha=0.5$} & \multicolumn{2}{c}{LM} \\
\cmidrule(l){2-4} \cmidrule(l){5-7} \cmidrule(l){8-10} \cmidrule(l){11-12}
$\delta$ & $e_{\rm sgd}$ & $\mathrm{std}(e_{\rm sgd})$ & $k_{\rm sgd}$ & $e_{\rm sgd}$ & $\mathrm{std}(e_{\rm sgd})$ & $k_{\rm sgd}$ & $e_{\rm sgd}$ & $\mathrm{std}(e_{\rm sgd})$ & $k_{\rm sgd}$ & $e_{\rm lm}$ & $k_{\rm lm}$\\
1e-3 &  1.63e-1 &  6.87e-2 &  1.348  & 1.59e-1 &  5.88e-2 &  4.030 &  1.55e-1 &  6.09e-2 &  48.02 &  1.51e-3 &  29 \\
5e-3 &  3.92e-1 &  2.08e-1 &  0.367  & 5.06e-1 &  2.05e-1 &  0.591 &  4.92e-1 &  1.99e-1 &  2.683 &  1.38e-2 &  18 \\
1e-2 &  5.95e-1 &  2.64e-1 &  0.242  & 8.57e-1 &  3.73e-1 &  0.303 &  9.46e-1 &  3.93e-1 &  0.774 &  4.06e-2 &  15 \\
5e-2 &  2.98e0  &  1.44e0  &  0.163  & 3.20e0  &  1.51e0  &  0.107 &  4.35e0  &  2.13e0  &  0.130 &  7.19e-1 &  9  \\
\bottomrule
\end{tabular}
\end{table}

The numerical results allow drawing a number of interesting observations. First, the exponent $\alpha$ in the stepsize
schedule exerts a strong influence on the (expected) stopping index $k_{\rm sgd}$. At low noise levels (i.e., small $\delta$), $k_{\rm
sgd}$ increases dramatically with the value of $\alpha$. Meanwhile, for any fixed $\alpha$, the error $e_{\rm sgd}$ increases
steadily with the noise level $\delta$, exhibiting the convergence behavior indicated in Theorem \ref{thm:dp:conv}.
Further, for each fixed $\delta$, the error $e_{\rm sgd}$ is largely comparable for all different $\alpha$ values,
although $k_{\rm sgd}$ increases with $\alpha$. This behavior is qualitatively in good agreement with Theorem \ref{thm:dp}: the upper bound scales as $O(\delta^{-\frac{2}{(1-\alpha)(\min\left(2p,r\right)+1)}})$. Thus, in practice, in order to obtain relatively efficient SGD, one
prefers small $\alpha$ values. Second, in terms of accuracy (measured by the mean squared error), SGD is competitive with the
classical Landweber method for \texttt{phillips}, \texttt{gravity} and \texttt{shaw}: $e_{\rm sgd}$
and $e_{\rm lm}$ are fairly close to each other in most cases, and $e_{\rm sgd}$ can be smaller than $e_{\rm lm}$, which fully confirms the
order-optimality of the discrepancy principle \eqref{eqn:dp} for SGD for low regularity solutions, and
also confirming the convergence in Theorem \ref{thm:dp:conv}. In fact, empirically, the error seems to converge not only in probability, but also in $L^2$.
A close inspection on the stopping index $k_{\rm sgd}$ is very telling: when the noise level $\delta$ is medium to large, the stopping index $k_{\rm sgd}$
of SGD, determined by \eqref{eqn:dp}, is ten-fold smaller than that for the Landweber method in terms of epoch
count. In particular, when the noise level $\delta$ is relatively high, SGD can actually deliver an accurate solution within less than one epoch,
i.e., going through only a fraction of all the available data points. Thus, in this regime, SGD is much more efficient than the Landweber method.
These observations are valid for all the examples, despite their dramatic difference in degree of ill-posedness and solution smoothness.
However, for \texttt{smoothed-phillips}, the achieved accuracy by SGD is far below than that by the Landweber method for all three
exponents $\alpha$. This suboptimality in convergence rate is attributed to the saturation phenomenon for SGD, due to the dominance
of the computational variance, when the true solution $x^\dag$ is very smooth. The effect of the variance component will be examined
more closely below in Section \ref{ssec:variance}.

The example \texttt{shaw} is challenging for numerical recovery, since the solution is far less smooth, and at low noise
level $\delta=$1e-3, the discrepancy principle \eqref{eqn:dpex} cannot be reached even after 5000 Landweber iterations,
see Table \ref{tab:lm-shaw}. A similar behavior is also
observed for SGD with $\alpha=0.3$ and $\alpha=0.5$. Nonetheless, with $\alpha=0.1$, the discrepancy principle \eqref{eqn:dp} can be
reached by SGD after a few hundred epochs, clearly showing the surprisingly beneficial effect of SGD noise for low-regularity
solutions.

Next we examine more closely the performance of individual samples. The boxplots are shown in Fig. \ref{fig:lm}
for the examples at two different scenarios, i.e., fixed $\alpha$ and fixed $\delta$. On each box, the central mark indicates the median, and the bottom
and top edges of the box indicate the 25th and 75th percentiles, respectively; The whiskers extend to the most
extreme data points not considered outliers, and the outliers are plotted individually using the '+' symbol. It is observed that for
a fixed $\alpha$, on average the error $\|x_{k(\delta)}^\delta-x^\dag\|^2$ increases with the noise level $\delta$ samplewise, and also its distribution broadens.
However, the required number of iterations to fulfill the discrepancy principle \eqref{eqn:dp} decreases dramatically, as the noise
level $\delta$ increases, concurring with the preceding observation that SGD is especially efficient for data with high noise levels. Meanwhile,
with the noise level $\delta$ fixed, the value of $\alpha$ does not change the results much overall. However, a larger $\alpha$ can potentially
make the percentile box larger and also more outliers, as shown by the results for \texttt{gravity} in Fig. \ref{fig:lm}, and
thus give less accurate results. This observation is counter-intuitive in that smaller variance does not immediately lead to better
accuracy. This might be related to the delicate interplay between the total error and various problem / algorithmic parameters, e.g., $\alpha$ and $p$.
Further, the outliers in the boxplots mostly lie above the box. These observations are typical for all the examples.

\begin{figure}[hbt!]
\centering
\includegraphics[width=.85\textwidth,trim={1.5cm 0.3cm 2cm 0.5cm}]{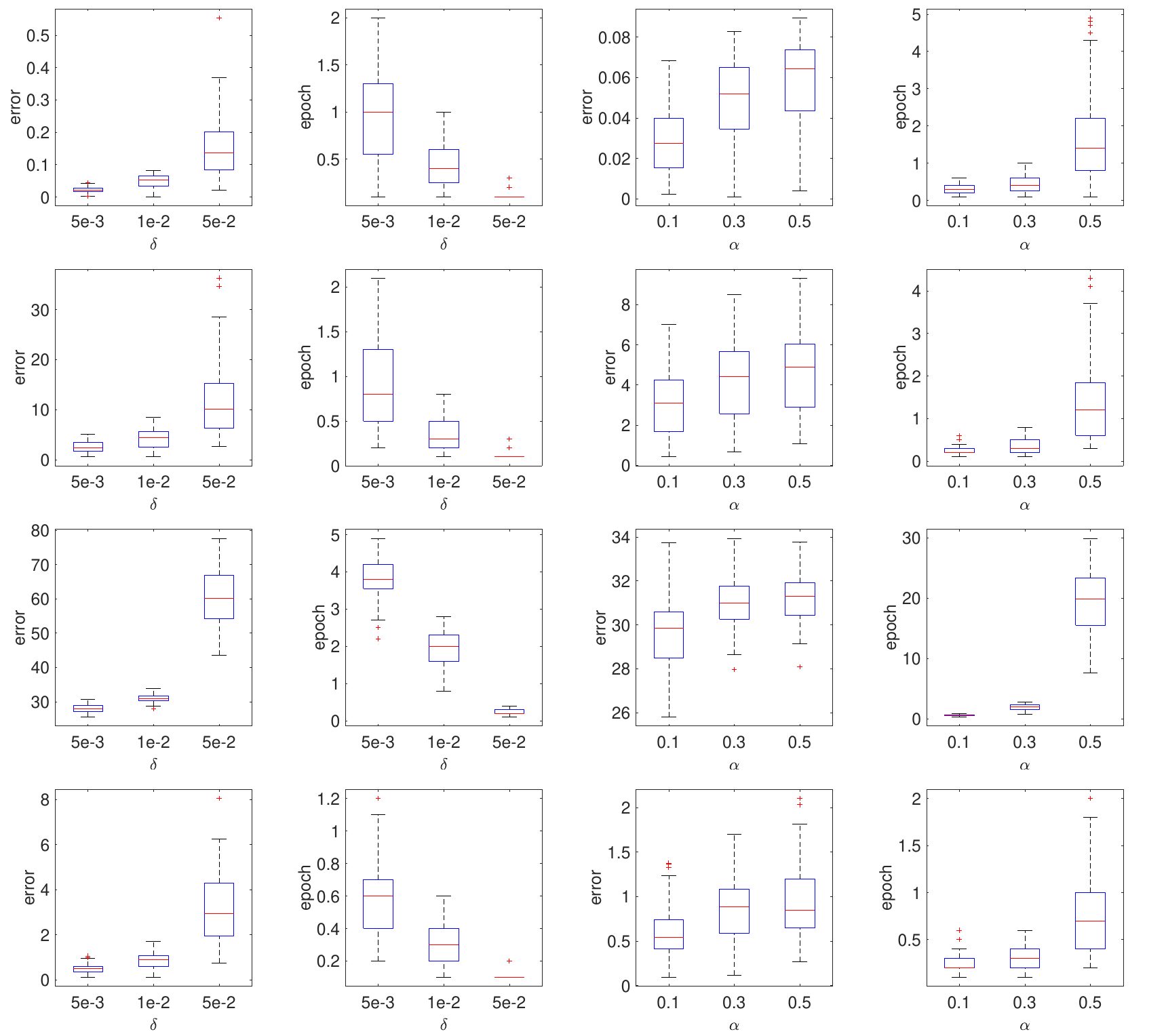}
\caption{Box plots for the error $\|x_{k_\delta}^\delta-x^\dag\|^2$ and the stopping index $k_\delta$ by SGD.
  The first two columns are obtained by SGD with $\alpha=0.3$, whereas the last two columns are for the noise level $\delta=$1e-2.
  The rows from top to bottom refer to \texttt{phillips}, \texttt{gravity}, \texttt{shaw} and \texttt{smoothed-phillips},
  respectively.}\label{fig:lm}
\end{figure}

\subsection{How influential is the variance?}\label{ssec:variance}

Now we examine more closely the dynamics of the SGD iteration via the bias-variance decomposition of the
error $\E[\|x_k^\delta-x^\dag\|^2]$ and residual $\E[\|Ax_k^\delta-y^\delta\|^2]$:
\begin{align*}
  \E[\|x_k^\delta-x^\dag\|^2] &= \|\E[x_k^\delta]-x^\dag\|^2 + \E[\|x_k^\delta-\E[x_k^\delta]\|^2],\\
  \E[\|Ax_k^\delta-y^\delta\|^2] &=\|A\E[x_k^\delta]-y^\delta\|^2 + \E[\|A(x_k^\delta-\E[x_k^\delta])\|^2].
\end{align*}
In Fig. \ref{fig:decomp}, we display the dynamics of mean squared error $\E[\|x_k^\delta-x^\dag\|^2]$ and
the mean squared residual $\E[\|Ax_k^\delta-y^\delta\|^2]$ together with their variance components for the
examples at two different relative noise levels, i.e., $\delta=$5e-3 and $\delta=$5e-2. At each time, SGD is run for
100 epochs (i.e., 1e5 SGD iterations), and the results are recorded every 50 SGD iterations, starting from
the 50th SGD iterations.

In the plots, we have indicated the true noise $\| y^\delta - y^\dagger\|^2$, also denoted by $\delta^2$. It is observed
that both $\E[\|x_k^\delta-x^\dag\|^2]$ and $\E[\|Ax_k^\delta-y^\delta\|^2]$ decay steadily at an algebraic rate
up to a value comparable to  the stopping index $k^*(\delta)$ for the Landweber method (by the discrepancy principle
\eqref{eqn:dpex}). Beyond the critical threshold $k^*(\delta)$, the error $\E[\|x_k^\delta-x^\dag\|^2]$ exhibits a
semiconvergence behavior in that it starts to increase, whereas the residual $\E[\|Ax_k^\delta-y^\delta\|^2]$ nearly
levels off at a value comparable with the noise level $\delta^2$ (actually it oscillates slightly, since the SGD iterate
is only descent for the residual on average). This is typical for iterative regularization methods
for inverse problems, since for the later iterates, the noise becomes the dominating driving force. Proposition
\ref{prop:sgd-residual} with $s=\frac12$ indicates that a similar behavior holds also for their variance components
(up to slightly beyond $k^*(\delta)$). Actually, the residual variance $\E[\|A(x_k^\delta-\E[x_k^\delta])\|^2]$ first
decays as $O(k^{-2(1-\alpha)})$ (upon ignoring the $\delta$ term), which matches well the empirical rate in the plot.
For the later iterates, as suggested by the $\delta$ term in Proposition \ref{prop:sgd-residual}, the decay is roughly
$O(k^{-\alpha})$. Likewise, the error variance $\E[\|x_k^\delta-\E[x_k^\delta]\|^2]$ decays slower at a rate
$O(k^{-(1-\alpha)})$. Interestingly, the decay rates of $\E[\|A(x_k^\delta-\E[x_k^\delta])\|^2]$ and $\E[\|x_k^\delta-
\E[x_k^\delta]\|^2]$ in the first and last columns are largely comparable, despite their drastic difference in the
smoothness of the exact solution $x^\dag$. Thus, the decay estimate in Proposition \ref{prop:sgd-residual} is actually
quite sharp, partially explaining the saturation phenomenon observed earlier. This behavior is consistently observed
for all three $\alpha$ values. It is worth noting that for \texttt{smoothed-phillips},
the curves for $\E[\|x_k^\delta-\E[x_k^\delta]\|^2]$ and $\E[\|x_k^\delta-x^\dag\|^2]$ nearly overlay
each other, i.e., the bias component is negligible after the initial 50 iterations, due to high smoothness of the
true solution, clearly indicating the saturation. For the other three examples, empirically, the variance components are of smaller order right after the initial 50 iterations. In particular, as stated in Proposition \ref{prop:sgd-variance}, $\E[\|A(x_k^\delta-\E[x_k^\delta])\|^2]$  contributes
very little to the mean squared residual $\E[\|Ax_k^\delta-y^\delta\|^2]$ in the neighborhood of $k^*(\delta)$. This occurs
for all three values of the exponent $\alpha$ in the stepsize schedule.
The observations hold also for individual realizations; see
Fig. \ref{fig:decomp0} for the corresponding plots. The overall behavior of the curves in Fig. \ref{fig:decomp0}
is fairly similar to that in Fig. \ref{fig:decomp}, except that the residual and error curves exhibit pronounced
oscillations due to the randomness of the row index selection. Nonetheless, in the neighborhood of $k^*(\delta)$, the variance components remain
much smaller in magnitude. This observation provides the key insight for the analysis in Section \ref{sec:bound}.

\begin{figure}
\centering
\includegraphics[width=0.85\textwidth,trim={1.5cm 0.3cm 2cm 0.5cm}]{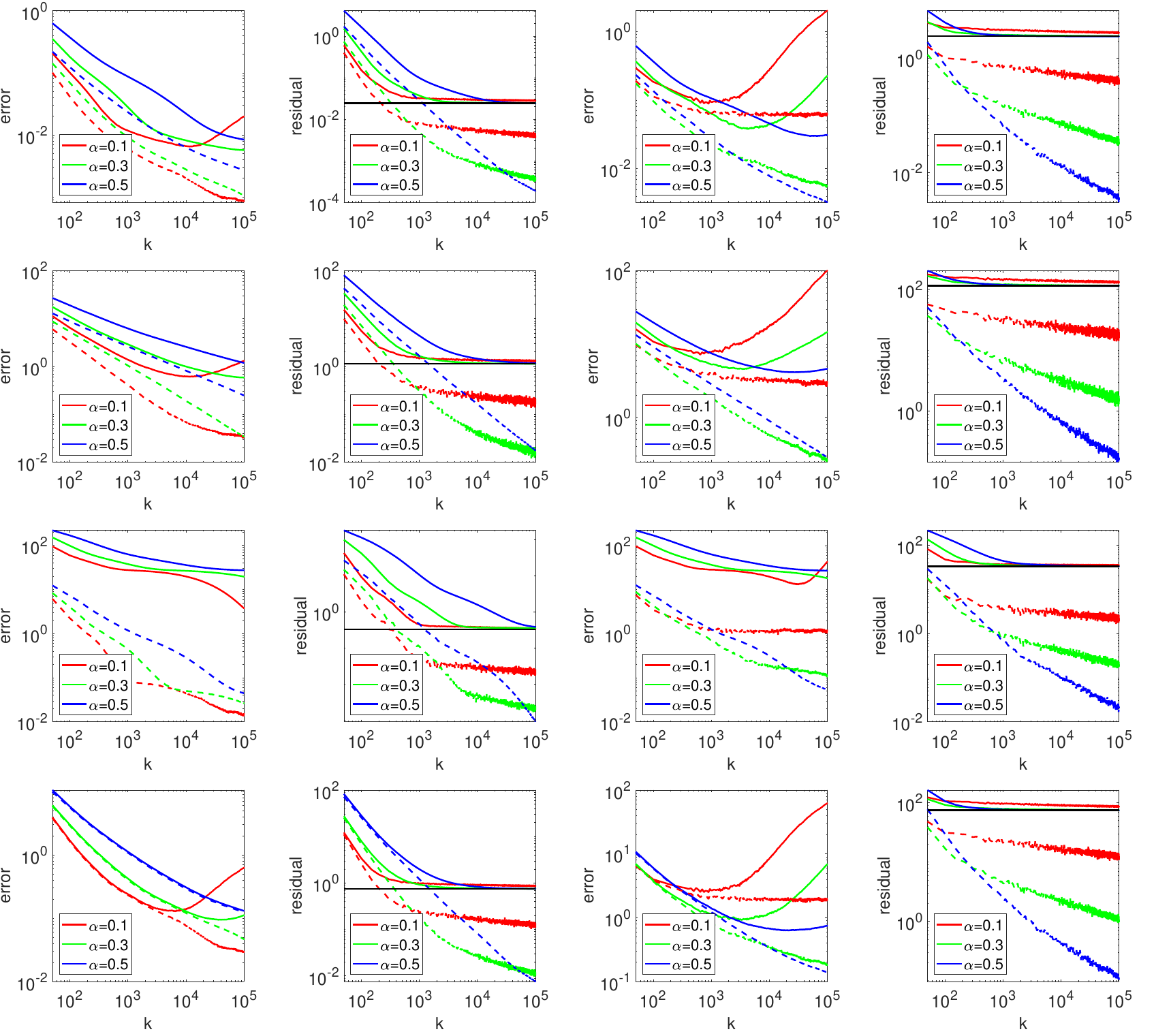}
\caption{The decay of the mean squared error $\E[\|x_k^\delta-x^\dag\|^2]$ and residual $\E[\|Ax_k^\delta-y^\delta\|^2]$
and their variance components $\E[\|x_k^\delta-\E[x_k^\delta]\|^2]$ and $\E[\|A(x_k^\delta-\E[x_k^\delta])\|^2]$ versus
the SGD iteration number $k$. The solid and dashed curves denote the mean squared quantity and the variance component,
respectively, and the black curve indicates the discrepancy $\delta^2=\|y^\delta-y^\dag\|^2$. The first two columns are
for the noise level $\delta=\mbox{5e-3}$ and the last two columns are for the noise level $\delta=\mbox{5e-2}$. The rows
from top to bottom refer to \texttt{phillips}, \texttt{gravity}, \texttt{shaw} and \texttt{smoothed-phillips}, respectively.}\label{fig:decomp}
\end{figure}

\begin{figure}
\centering
\includegraphics[width=0.85\textwidth,trim={1.5cm 0.3cm 2cm 0.5cm}]{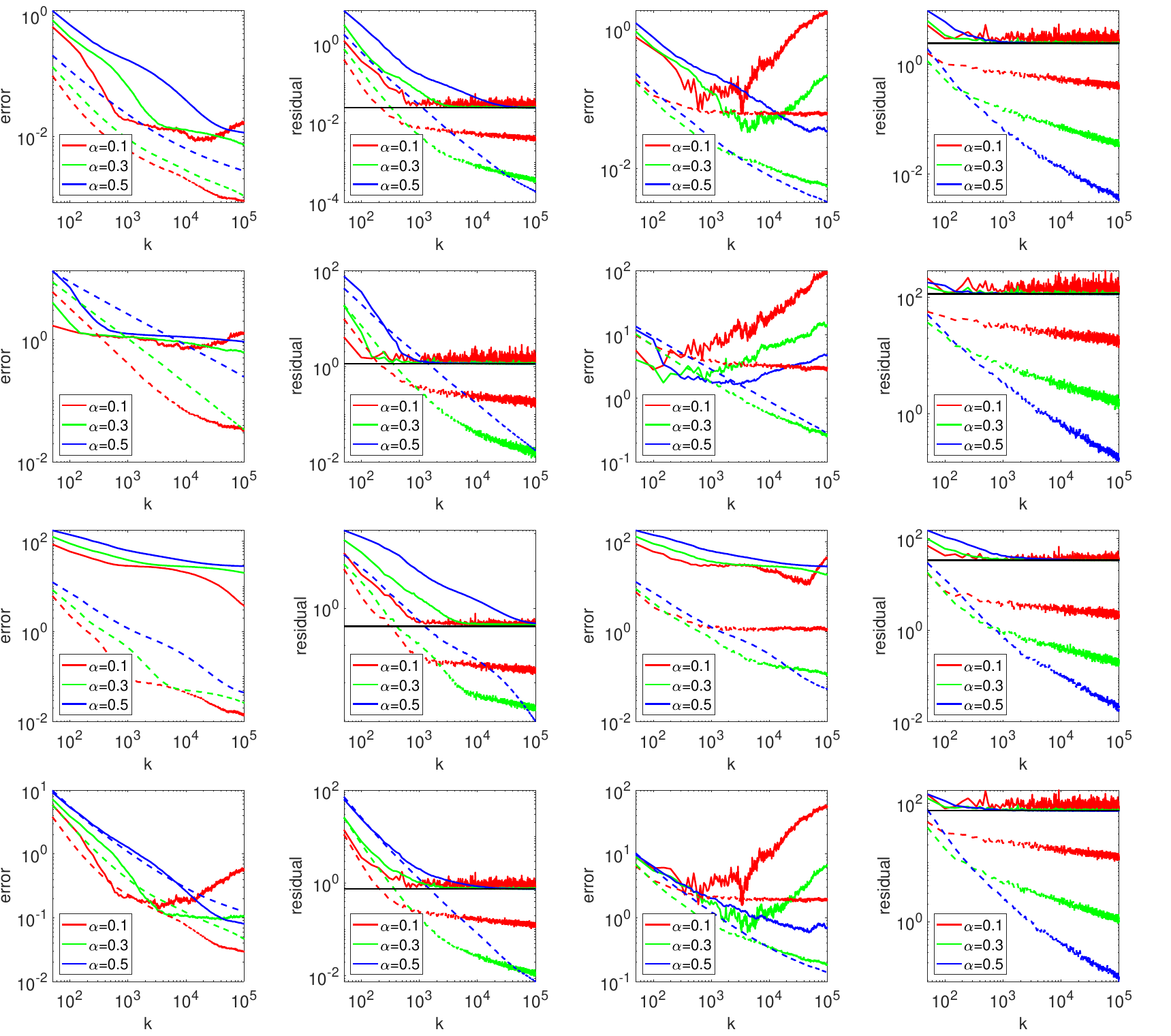}
\caption{The decay of the squared error $\|x_k^\delta-x^\dag\|^2$ and residual $\|Ax_k^\delta-y^\delta\|^2$
and their variance components $\E[\|x_k^\delta-\E[x_k^\delta]\|^2]$ and $\E[\|A(x_k^\delta-\E[x_k^\delta])\|^2]$ versus
the SGD iteration number $k$. The solid and dashed curves denote the squared quantity and the variance components,
respectively, and the black curve indicates the discrepancy $\delta^2=\|y^\delta-y^\dag\|^2$. The first two columns are
for the noise level $\delta=\mbox{5e-3}$ and the last two columns are for the noise level $\delta=\mbox{5e-2}$. The rows
from top to bottom refer to \texttt{phillips}, \texttt{gravity}, \texttt{shaw} and \texttt{smoothed-phillips}, respectively.}\label{fig:decomp0}
\end{figure}

\subsection{Independent run}

The convergence analysis in Theorem \ref{thm:dp:conv} requires a SGD iterate $x_{k(\delta)}^\delta$ independent of
the stopping index $k(\delta)$ determined by the discrepancy principle \eqref{eqn:dp}. In practice this can be achieved by an
independent run of SGD, at the expense of slightly increasing the computational effort. Now we
examine the impact of this choice, and we denote by DP and i-DP the SGD iterate used in
\eqref{eqn:dp} and that by an independent SGD run, respectively. The relevant
numerical results are presented in Tables \ref{tab:ind-phillips}--\ref{tab:ind-philsm}, where the
numbers outside and inside the bracket denote $e_{\rm sgd}$ and $\mathrm{std}(e_{\rm sgd})$, respectively.
It is observed that DP gives only slightly better results in terms of the mean, but its standard deviation
${\rm std}(e_{\rm sgd})$ is generally much smaller than that by i-DP. Nonetheless,
both the mean $\rm e_{\rm sgd}$ and the standard deviation ${\rm std}(e_{\rm sgd})$ of i-DP are
decreasing steadily as the noise level $\delta$ decreases to 0, confirming the convergence
result in Theorem \ref{thm:dp:conv}.

The difference is more clearly visualised in the boxplots in Fig. \ref{fig:ind} (for
\texttt{phillips} with two noise levels). A close look shows that the mean and percentile
are fairly close to each other, but the i-DP result tends to have far more
outliers lying above the box (marked by red cross in the plots). This is attributed to the fact that $k(\delta)$
determined by the discrepancy principle \eqref{eqn:dp} is occasionally too small for an independent SGD run, and thus
the corresponding residual is far above the target noise level in the discrepancy principle \eqref{eqn:dp};
see the boxplots in the last column of Fig. \ref{fig:ind}. That is, the outliers are due to stopping
too early. This agrees with the observation that one iteration step of SGD has only a small
effect on the high frequency components (because of the scaling with the corresponding small singular values).
Thus, small $\|Ax_k^\delta - y^\dagger\|$ for $k\ll k^*(\delta)$ implies that also $\| x_k^\delta -
x^\dagger\|$ is small. Although not presented, we note that this behavior is
observed for all the examples at different noise levels. Thus, in practice, using the SGD iterate directly
from the path for \eqref{eqn:dp} is preferred, taking into account both accuracy and
computational efficiency. It is an interesting theoretical question to analyze the convergence
(and convergence rates) of the SGD iterate by
\eqref{eqn:dp}.

\begin{table}[hbt!]
\centering
\caption{Comparison between DP and i-DP for \texttt{phillips}.\label{tab:ind-phillips}}
\setlength{\tabcolsep}{4pt}
\begin{tabular}{c|cc|cc|cc|}
\toprule
\multicolumn{1}{c}{}&\multicolumn{2}{c}{$\alpha=0.1$} & \multicolumn{2}{c}{$\alpha=0.3$} & \multicolumn{2}{c}{$\alpha=0.5$}\\
\cmidrule(l){2-3} \cmidrule(l){4-5} \cmidrule(l){6-7}
$\delta$ & DP  & i-DP & DP & i-DP & DP & i-DP\\
1e-3 &8.60e-3 (4.53e-3) &  1.12e-2 (1.18e-2) &  8.53e-3 (4.42e-3) &  1.28e-2 (1.88e-2) &  8.34e-3 (4.60e-3) &  1.28e-2 (1.55e-2)\\
5e-3 &1.70e-2 (8.41e-3) &  2.31e-2 (2.43e-2) &  2.31e-2 (8.81e-3) &  3.43e-2 (3.54e-2) &  2.48e-2 (7.38e-3) &  4.17e-2 (3.63e-2)\\
1e-2 &2.82e-2 (1.62e-2) &  4.35e-2 (4.44e-2) &  4.72e-2 (2.07e-2) &  6.43e-2 (5.67e-2) &  5.78e-2 (2.04e-2) &  6.85e-2 (5.66e-2)\\
5e-2 &1.41e-1 (9.70e-2) &  1.53e-1 (8.97e-2) &  1.49e-1 (9.01e-2) &  1.80e-1 (1.25e-1) &  2.11e-1 (9.69e-2) &  2.47e-1 (1.93e-1)\\
\bottomrule
\end{tabular}
\end{table}

\begin{table}[hbt!]
\centering
\caption{Comparison between DP and i-DP for \texttt{gravity}.\label{tab:ind-gravity}}
\setlength{\tabcolsep}{4pt}
\begin{tabular}{c|cc|cc|cc|}
\toprule
\multicolumn{1}{c}{}&\multicolumn{2}{c}{$\alpha=0.1$} & \multicolumn{2}{c}{$\alpha=0.3$} & \multicolumn{2}{c}{$\alpha=0.5$}\\
\cmidrule(l){2-3} \cmidrule(l){4-5} \cmidrule(l){6-7}
$\delta$ & DP  & i-DP & DP & i-DP & DP & i-DP\\
1e-3 &6.71e-1  (2.61e-1) &  9.30e-1  (7.45e-1)  &  7.46e-1  (2.73e-1)&  1.03e0  (8.04e-1) &  7.78e-1  (2.49e-1) & 1.00e0  (7.23e-1)\\
5e-3 &2.00e0   (8.91e-1) &  2.43e0   (1.39e0)   &  2.53e0   (1.12e0) & 3.74e0   (2.62e0)  &  2.76e0   (1.14e0)  &3.44e0   (2.36e0) \\
1e-2 &3.12e0   (1.57e0)  &  4.03e0    (2.54e0)  &  4.33e0   (1.92e0) & 5.24e0   (3.13e0)  &  4.74e0   (2.07e0)  &6.98e0   (4.17e0) \\
5e-2 &9.07e0   (5.31e0)  &  1.01e1    (5.49e0)  &  1.15e1   (6.61e0) & 1.19e1   (8.16e0)  &  1.52e1   (7.46e0)  &1.72e1   (1.10e1) \\
\bottomrule
\end{tabular}
\end{table}

\begin{table}[hbt!]
\centering
\caption{Comparison between DP and i-DP for \texttt{shaw}.\label{tab:ind-shaw}}
\setlength{\tabcolsep}{4pt}
\begin{tabular}{c|cc|cc|cc|}
\toprule
\multicolumn{1}{c}{}&\multicolumn{2}{c}{$\alpha=0.1$} & \multicolumn{2}{c}{$\alpha=0.3$} & \multicolumn{2}{c}{$\alpha=0.5$}\\
\cmidrule(l){2-3} \cmidrule(l){4-5} \cmidrule(l){6-7}
$\delta$ & DP  & i-DP & DP & i-DP & DP & i-DP\\
1e-3 & 8.29e0   (9.35e-2) &  8.30e0  (3.29e-1)  & 8.47e0   (5.59e-2) &  8.50e0  (2.67e-1)   &  2.01e1   (5.64e-1) &  2.00e1  (5.25e-1)\\
5e-3 & 2.77e1   (1.24e0)  & 2.77e1   (1.27e0)   & 2.80e1   (1.16e0)  & 2.81e1   (1.31e0)    &  2.82e1   (1.02e0)  & 2.80e1   (1.22e0) \\
1e-2 & 2.96e1   (1.65e0)  & 3.03e1   (2.58e0)   & 3.10e1   (1.14e0)  & 3.13e1   (2.74e0)    &  3.12e1   (1.08e0)  & 3.16e1   (2.44e0) \\
5e-2 & 5.02e1   (1.08e1)  & 5.34e1   (1.53e1)   & 6.07e1   (8.08e0)  & 6.19e1   (1.23e1)    &  6.70e1   (7.41e0)  & 7.04e1   (1.35e1) \\
\bottomrule
\end{tabular}
\end{table}

\begin{table}[hbt!]
\centering
\caption{Comparison between DP and i-DP for \texttt{smoothed-phillips}.\label{tab:ind-philsm}}
\setlength{\tabcolsep}{4pt}
\begin{tabular}{c|cc|cc|cc|}
\toprule
\multicolumn{1}{c}{}&\multicolumn{2}{c}{$\alpha=0.1$} & \multicolumn{2}{c}{$\alpha=0.3$} & \multicolumn{2}{c}{$\alpha=0.5$}\\
\cmidrule(l){2-3} \cmidrule(l){4-5} \cmidrule(l){6-7}
$\delta$ & DP  & i-DP & DP & i-DP & DP & i-DP\\
1e-3 & 1.63e-1   (6.87e-2) &  1.92e-1   (1.27e-1) &  1.59e-1   (5.88e-2) &  2.00e-1   (1.30e-1) &  1.55e-1   (6.09e-2) & 1.93e-1   (1.88e-1)\\
5e-3 & 3.92e-1   (2.08e-1) &  4.68e-1   (3.47e-1) &  5.06e-1   (2.05e-1) &  6.54e-1   (4.79e-1) &  4.92e-1   (1.99e-1) & 7.51e-1   (5.73e-1)\\
1e-2 & 5.95e-1   (2.64e-1) &  8.12e-1   (5.04e-1) &  8.57e-1   (3.73e-1) &  1.22e0    (1.03e0)  &  9.46e-1   (3.93e-1) & 1.46e0    (1.13e0) \\
5e-2 & 2.98e0    (1.44e0)  &  3.25e0    (1.52e0)  &  3.20e0    (1.51e0)  &  3.25e0    (1.94e0)  &  4.35e0    (2.13e0)  & 4.59e0    (3.29e0) \\
\bottomrule
\end{tabular}
\end{table}

\begin{figure}
\centering
\includegraphics[width=0.85\textwidth,trim={0.5cm 0.2cm 1cm 0cm}]{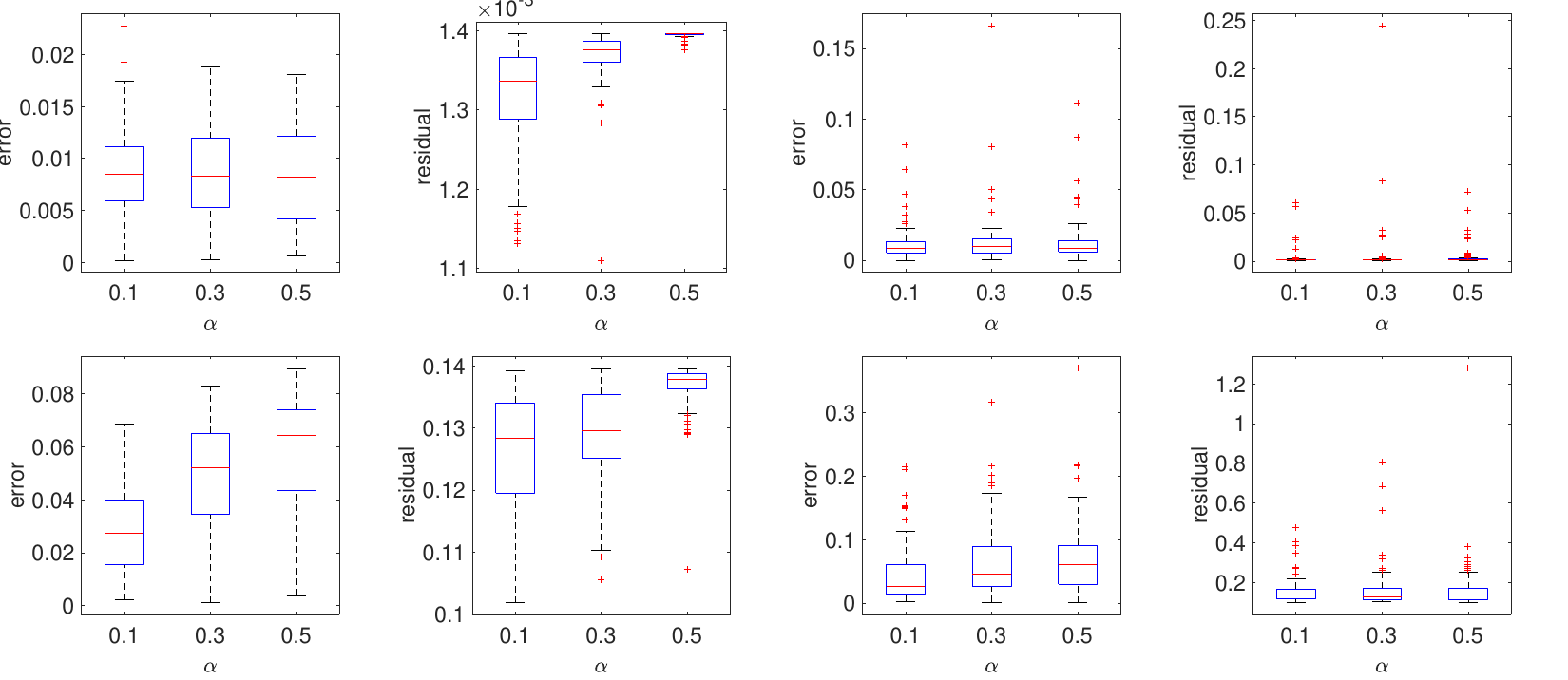}
\caption{Boxplots for the error $\|x_{k(\delta)}^\delta-x^\dag\|^2$ and the residual $\|Ax_{k(\delta)}^\delta-y^\delta\|^2$ for DP (the first two columns)
and i-DP (the last two columns), for \texttt{phillips} at two noise levels, i.e., $\delta=\mbox{1e-3}$ (top) and $\delta=\mbox{1e-2}$ (bottom).}\label{fig:ind}
\end{figure}

\section{Concluding remarks}
In this work, we have presented a preliminary study on the discrepancy principle as
an \textit{a posteriori} stopping rule for the popular stochastic gradient descent for solving
linear inverse problems. We proved a finite-iteration termination property of the principle,
and a consistency result in high probability for an independent version of discrepancy principle.
Several numerical experiments indicate the feasibility of the rule as a stopping criterion.

There are several outstanding questions that deserve further research. First, one important question is the convergence
of the dependent version of the discrepancy principle, and convergence rates (and also
optimality, if possible!). This would put the discrepancy principle on a firm mathematical basis. {Second,
it is of much interest to study stochastic gradient descent for inverse problems with random noise, with either
\textit{a priori} or \textit{a posteriori} stopping rules. In particular, in this context, the discrepancy
principle may have to be properly adapted; see the works \cite{BlanchardMathe:2012,HarrachJahn:2020}
for interesting discussions with deterministic inversion techniques. Third, the analysis so far does not
cover the critical case $\alpha=1$ in the stepsize schedule. This choice is often adopted in the context
of stochastic approximation \cite{KushnerYin:2003} for optimal asymptotic behaviour, but it is unclear
whether the discrepancy principle can be applied then.}

\bibliographystyle{abbrv}
\bibliography{references}
\end{document}